\newtheorem{definition}{Definition}[section]
\newtheorem{lemma}{Lemma}[section]
\newtheorem{corollary}{Corollary}[section]
\newtheorem{remark}{Remark}[section]
\newtheorem{theorem}{Theorem}[section]
\numberwithin{equation}{section}
\begin{document}
\title[Stability for evolution equations with variable growth]{Stability for evolution equations with variable growth}
\author[S.~Shmarev, J.~Simsen, M.~Stefanello Simsen]{Sergey Shmarev \and Jacson Simsen \and Mariza Stefanello Simsen
\address{S.~Shmarev: Depto.~Matemáticas, Universidad de Oviedo, c/Federico García Lorca 18, 33007 Espa\~na}
\email{shmarev@uniovi.es}
\address{J.~Simsen: Instituto de Matemática e Computa\c{c}\~{a}o, Universidade Federal
de Itajubá, 37500-903 Itajubá, Minas Gerais, Brazil}
\email{jacson@unifei.edu.br}
\address{M.~Stefanello Simsen: Instituto de Matemática e Computa\c{c}\~{a}o, Universidade Federal
de Itajubá, 37500-903 Itajubá, Minas Gerais, Brazil}
\email{mariza@unifei.edu.br}}
\thanks{The first authors acknowledges the support of the Research Grant MTM2017-87162-P, España}


\date{20.03.2021}

\begin{abstract}
We study the character of dependence on the data and the nonlinear structure of the equation for the solutions of the homogeneous Dirichlet problem for the evolution $p(x,t)$-Laplacian with the nonlinear source
\[
u_t-\Delta_{p(x,t)}u=f(x,t,u),\quad (x,t)\in Q=\Omega\times (0,T),
\]
where $\Omega$ is a bounded domain in $\mathbb{R}^n$, $n\geq 2$, and $p(x,t)$ is a given function $p(\cdot):Q\mapsto (\frac{2n}{n+2},p^+]$, $p^+<\infty$. It is shown that the solution is stable with respect to perturbations of the variable exponent $p(x,t)$, the nonlinear source term $f(x,t,u)$, and the initial data. We obtain quantitative estimates on the norm of the difference between two solutions in a variable Sobolev space through the norms of perturbations of the nonlinearity exponent and the data $u(x,0)$, $f$. Estimates on the rate of convergence of a sequence of solutions to the solution of the limit problem are derived.
\end{abstract}

%
%
%



\maketitle

\section{Introduction}
\label{sec:Introduction}
The paper addresses the question of continuous dependence on the data for solutions of the Dirichlet problem for the quasilinear parabolic equations with variable nonlinearity:

\begin{equation}
\label{eq:prot}
\begin{cases}
& u_t-\Delta_{p(z)}u=f(z,u)\quad \text{in $Q$},
\\
& \text{$u=0$ on $\partial \Omega\times (0,T)$},
\\
& \text{$u(x,0)=u_0(x)$ in $\Omega$},
\end{cases}
\end{equation}
where $\Omega\subset \mathbb{R}^n$, $n\geq 2$, is a bounded domain, $Q=\Omega\times (0,T)$ is the cylinder of the finite height $T$. By $z=(x,t)$ we denote the points of $Q$. The differential operator

\[
\Delta_{p(z)}u=\operatorname{div}\left(|\nabla u|^{p(z)-2}\nabla u\right)
\]
with a given function $p:Q\mapsto (1,\infty)$ is a generalization of the classical $p$-Laplacian operator with constant $p$. The right-hand side of equation \eqref{eq:prot} is a given function of its arguments. We will distinguish between the cases where $f$ is a given function of the independent variables $z\in Q$, or depends also on the solution $u(z)$. In the latter case, we consider either the nonlinear sources of the form

\begin{equation}
\label{eq:source-intro}
f(z,s)=-a|s|^{\sigma(z)-2}s+f_0(z)
\end{equation}
with a given function $\sigma:Q\mapsto [2,\infty)$, $a=const$, and $f_0$ in a suitable Lebesgue space, or $f(z,u)=\phi(u)+f_0(z)$ with a Lipschitz-continuous function $\phi(\cdot)$.

By now, the theory of PDEs with variable nonlinearity accounts for numerous results on the issues of existence, uniqueness, and qualitative properties of solutions. The results on the character of dependence of solutions to problem \eqref{eq:prot} on the data and on the nonlinear structure of the equations are scarce, albeit the study of the stability of the mathematical models based on PDEs is important to ensure applicability of the theory to real-world problems because the mathematical models are built, in general, on approximate experimental data.

It is known that the solutions of the Dirichlet problem for the evolution $p(z)$-Laplace equation and the source term with $\sigma(z)=p(z)$, $f_0=0$ and $a\geq 0$ are stable with respect to the initial data \cite{Erhardt-2017-stab}: the solutions of problem \eqref{eq:prot} corresponding to the initial data $u_0$, $v_0$ satisfy the estimate

\[
\|u(\cdot,t)-v(\cdot,t)\|_{2,\Omega}^{2}\leq \|u_0-v_0\|_{2,\Omega}^{2}\qquad \text{for a.e. $t\in (0,T)$}.
\]
Similar stability estimates in $L^2(\Omega)$ were proven in \cite{Erhardt-2021} for the solutions of pa\-ra\-bo\-lic systems with nonstandard growth and a cross-diffusion term. The $p(z)$-Laplacian is a prototype of the operators with nonstandard growth considered in \cite{Erhardt-2017-stab,Erhardt-2021}. In \cite{Huashui-2021}, the stability estimates in $L^1(\Omega)$ with respect to the initial data were derived for the solutions of anisotropic parabolic equations with double variable nonlinearity, convective terms, and possible degeneracy on the lateral boundary of the problem domain.

Continuity of solutions of equation \eqref{eq:prot} with respect to the variable exponent $p$ and convergence to the solution of the limit problem was discussed in papers \cite{SSP1,SS,SSP2,SSP3}. In these works, continuity in $C([0,T];L^2(\Omega))$ is proven for the solutions of degenerate equations with $p\equiv p(x)>2$. These restrictions are due to the method of the study, based on the analysis of the semigroup generated by the operator $-\Delta_{p(\cdot)}$ in $L^2(\Omega)$.

Another approach to this problem was developed in \cite{Kinnunen-Parviainen-2010,Lukkari-Parviainen-2015}. In \cite{Kinnunen-Parviainen-2010}, continuity of solutions of the evolution $p$-Laplace equation with respect to $p$ was considered for the Dirichlet problem in a cylinder, and in \cite{Lukkari-Parviainen-2015} for the Cauchy problem with constant $p$. It is shown in \cite{Kinnunen-Parviainen-2010} that the solutions to the problem
\begin{equation}
\label{eq:p-const}
\text{$u_t-\Delta_pu=0$ in $Q$},\qquad \text{$u=\phi$ on the parabolic boundary of $Q$}
\end{equation}
with constant $p\geq 2$ and a sufficiently regular boundary $\partial\Omega$ are continuous with respect to the perturbation of the exponent $p$: let $p_i\to p$ and $u_i$ be the corresponding solutions of problem \eqref{eq:p-const}; there exists $\epsilon>0$ such that $u_i\to u$ in $L^{p+\epsilon}(0,T;W^{1,p+\epsilon}(\Omega))$ where $u$ is the solution of the limit problem. A specific difficulty is that the solutions corresponding to different $p_i$ belong to different energy spaces prompted by \eqref{eq:p-const}:

\[
\int_{Q}\left(|u_i|^{p_i}+|\nabla u_i|^{p_i}\right)\,dz<\infty.
\]
This difficulty is overcome due to the property of global higher integrability of the gradient. It turns out that there exists $\epsilon>0$ such that $|\nabla u_i|\in L^{p+\epsilon}(Q)$ for the sufficiently large $i$, which allows one to conclude that the sequence $\{u_i\}$ is a Cauchy sequence in this space and converges to the solution of the limit problem.

In the stationary case, it is shown in \cite{Eleuteri-2013} that the solutions of the obstacle problem for elliptic equations with nonstandard growth are stable with respect to perturbations of their nonlinear structure. The $p(x)$-Laplace equation is a prototype of the class of equations studied in \cite{Eleuteri-2013}, where the property of global higher integrability of the gradient is also one of the main ingredients in the proof of stability.

In the present paper, we are interested in the continuity of solutions of problem \eqref{eq:prot} with respect to the initial data $u_0$, the source $f$ in the form \eqref{eq:source-intro}, and the exponents of nonlinearity $p(z)$, $\sigma(z)$. We find sufficient conditions of continuity and estimate the moduli of continuity with respect to each component of the vector $\{p(z),u_0,f_0,\sigma(z)\}$ (the data). The proof of continuity with respect to $p(z)$ relies on the property of global higher integrability of the gradient. This property is derived in \cite{ArShm-2021} for the solutions of problem \eqref{eq:prot} with $f(z,u)\equiv f(z)$ under certain assumptions on the regularity of the variable exponent $p(z)$ and the data $u_0$, $f$, which allow one to show also that the corresponding solution possesses better regularity than the weak solutions. Such solutions are called the strong solutions - see Definition \ref{def:strong-sol}. The results on continuity are formulated in terms of the norm in a variable Sobolev space of the difference between a strong solution corresponding to a regular exponent $p(z)$ and the weak solution of the same problem with an exponent $q(z)$.

Organization of the paper. In Section \ref{sec:prelim} we introduce the variable Lebesgue and Sobolev spaces and collect their basic properties. Section \ref{sec:p-Laplace} is devoted to the study of problem \eqref{eq:prot} with the  independent of $u$ source term $f(z)$. The main result is given in Theorem \ref{th:stab-1}. Let us illustrate it by the following example. Assume that

\[
\partial\Omega\in C^2,\quad p(\cdot),q(\cdot):Q\mapsto\left(\dfrac{2n}{n+2}, \beta\right),\;\; \beta<\infty,\quad f\in L^2(0,T;W^{1,2}_0(\Omega))
\]
and

\[
\int_{\Omega}|\nabla u_0|^{s(x)}\,dx<\infty\quad \text{with  $s(x)=\max\{2,p(x)\}$}.
\]
If $p(\cdot)$ is Lipschitz-continuous in $\overline{Q}$, $q(\cdot)$ is continuous in $\overline{Q}$ with a logarithmic modulus of continuity, and $-\alpha\leq p(z)-q(z)\leq \gamma$ with positive constants $\alpha,\gamma$ depending only on $n$ (condition \eqref{eq:proximity}), then the solutions of problem \eqref{eq:prot} $u$, $v$ with the exponents $p$ and $q$, and the same initial datum and the source term, satisfy the inequality

\begin{equation}
\label{eq:stab-example}
\|u-v\|_{L^\infty(0,T;L^2(\Omega))}^2+\int_Q|\nabla u-\nabla v|^{q(z)}\,dz\leq C\left(\mathcal{R}+\mathcal{R}^{\frac{q^+}{2}}+\mathcal{R}^{\frac{q^-}{2}}\right)
\end{equation}
where
\[
\mathcal{R}=\sup_Q\left(|p-q|^{\frac{q}{q-1}}\right),\quad q^+=\sup_Q q,\quad q^-=\inf_Q q.
\]
The constant $C$ depends on the structural constants and the norms of $u$ and $v$ in the corresponding energy spaces, which are estimated through the data. It is worth noting that we do not distinguish between the degenerate and singular equations, i.e., $p(z),q(z)> 2$, or $1<p(z),q(z)< 2$. Moreover, the regions of degeneracy and singularity of the equations for $u$ and $v$ may overlap but need not coincide. In the general case where the solutions $u$ and $v$ correspond to different initial data $u_0$, $v_0$ and the source terms $f_1$, $f_2$, $\mathcal{R}$ depends also on $\|u_0-v_0\|_{2,\Omega}$ and $\|f_1-f_2\|_{2,Q}$. In Theorem \ref{th:comp-p-Laplace} we present results on convergence of families of strong solutions of problem \eqref{eq:prot} to weak (strong) solutions of the limit problem.

In Section \ref{sec:pert} we consider problem \eqref{eq:prot} with a  nonlinear source. The main attention is paid to the case of the source of the form \eqref{eq:source-intro} with $a\geq 0$. The study is confined to the class of exponents $\sigma(z)$ for which the existence of a weak solution $u(z)$ is already known. This fact allows one to consider the nonlinear source $f(z,u(z))$ as a given function of the independent variable $z$ and apply the results of Section \ref{sec:p-Laplace}, which is possible for degenerate equations under the following assumptions on the range of the exponents in \eqref{eq:prot}, \eqref{eq:source-intro}:

\[
\text{$2\leq p(z)$,\quad $2\leq \sigma(z)\leq 1+\dfrac{p(z)}{2}$ in $\overline{Q}$.}
\]
We derive estimates of the type \eqref{eq:stab-example} for the solutions of problem \eqref{eq:prot}, \eqref{eq:source-intro} corresponding to the data $\{p(z), u_0(x), \\ f_0(z), \sigma(z)\}$ and $\{q(z),v_0(x),g_0(z),\mu(z)\}$ with $\mathcal{R}$ depending also on $\sup_Q|\sigma(z)-\mu(z)|$ and the norms of $u_0-v_0$, $f_0-g_0$. This result is true if at least one of the sets of data produces a strong solution.

In Subsection \ref{subsec:reaction} we extend the results to the nonlinear sources of the form $f(z,s)=\phi(s)+f_0(z)$ with the same  Lipschitz-continuous, not necessarily sign-definite function $\phi$. Since $\phi(s)$ is no longer assumed to be sign-definite, the norm of the difference between two solutions is estimated by the same quantity as in the case \eqref{eq:source-intro} with $a\geq 0$, but with a coefficient that grows exponentially in time.

\section{Preliminaries}
\label{sec:prelim}
We collect here the background information on the variable Lebesgue and Sobolev spaces used throughout the paper. We refer to the monograph \cite{DHHR-2011} for further information, and also to \cite[Ch.1]{ant-shm-book-2015} and \cite{DNR-2012} for the properties of spaces of functions defined on cylinders.

Let $\Omega$ be a bounded domain with Lipschitz-continuous boundary
$\partial \Omega$ and $p: \Omega \to [p^-, p^+] \subset (1,
\infty)$ be a measurable function. Define the functional

\[
\rho_{p(\cdot)}(f)=
\int_{\Omega} |f(x)|^{p(x)} \,dx
\]
(the modular). The set

\[
L^{p(\cdot)}(\Omega) = \{f:\Omega
\to \mathbb{R}:\text{$f$ is measurable on $\Omega$}, \rho_{p(\cdot)}(f) <
\infty\}
\]
equipped with the Luxemburg norm

\[
\|f\|_{p(\cdot),\Omega}= \inf \left\{\lambda>0 :
\rho_{p(\cdot)}\left(\dfrac{f}{\lambda}\right) \leq 1\right\}
\]
is a reflexive and separable Banach space and $C_0^\infty(\Omega)$ is dense in $L^{p(\cdot)}(\Omega)$. The modular $\rho_{p(\cdot)}(f)$ is lower semicontinuous.
By the definition of the norm

\begin{equation}
\label{eq:mod}
\min\{\|f\|^{p^-}_{p(\cdot), \Omega}, \|f\|^{p^+}_{p(\cdot), \Omega}\}
\leq \rho_{p(\cdot)}(f) \leq \max\{\|f\|^{p^-}_{p(\cdot), \Omega}, \|f\|^{p^+}_{p(\cdot), \Omega}\}.
\end{equation}
The dual of $L^{p(\cdot)}(\Omega)$ is the space $L^{p'(\cdot)}(\Omega)$ with the conjugate exponent  $p'=\dfrac{p}{p-1}$. For $f \in L^{p(\cdot)}(\Omega)$ and $g \in L^{p'(\cdot)}(\Omega)$, the generalized
H\"older inequality holds:

\begin{equation}
\label{eq:Holder}
\int_{\Omega} |fg| \leq
\left(\frac{1}{p^-} + \frac{1}{(p')^-} \right) \|f\|_{p(\cdot),
\Omega} \|g\|_{p'(.), \Omega} \leq 2 \|f\|_{p(.), \Omega}
\|g\|_{p'(\cdot), \Omega}.
\end{equation}
Let $p_1, p_2$ be two bounded
measurable functions in $\Omega$ such that $1<p_1(x) \leq p_2(x)$ a.e. in $\Omega$. Then $L^{p_2(\cdot)}(\Omega)$ is continuously
embedded in $L^{p_1(\cdot)}(\Omega)$ and

\[
\forall \,u \in
L^{p_2(\cdot)}(\Omega)\qquad \|u\|_{p_1(\cdot), \Omega} \leq C(|\Omega|,
p_1^\pm, p_2^\pm) \|u\|_{p_2(\cdot), \Omega}.
\]
The variable exponent Sobolev space $W_0^{1,p(\cdot)}(\Omega)$ is defined as the set of functions

\[
W_0^{1,p(\cdot)}(\Omega)= \{u: \Omega \to
\mathbb{R}\ |\  u \in L^{p(\cdot)}(\Omega) \cap W_0^{1,1}(\Omega),
|\nabla u| \in L^{p(\cdot)}(\Omega) \}
\]
equipped with the norm

\[
\|u\|_{W_0^{1,p(\cdot)}(\Omega)}= \|u\|_{p(\cdot),\Omega} +
\|\nabla u\|_{p(\cdot), \Omega}.
\]
Let $p\in C_{{\rm log}}(\overline{\Omega})$, {\it i.e.}, the exponent $p$ is continuous in $\overline{\Omega}$ with the logarithmic modulus of continuity:

\begin{equation}
\label{eq:log-cont}
|p(x_1)-p(x_2)| \leq \omega(|x_1-x_2|),
\end{equation}
where $\omega(\tau)$ is a nonnegative function satisfying the condition

\[
\limsup_{\tau \to 0^+} \omega(\tau) \ln
\left(\frac{1}{\tau}\right)= C< \infty.
\]
Then $C_0^{\infty}(\Omega)$ is dense in
$W_0^{1,p(\cdot)}(\Omega)$ and the Poincar\'e inequality holds: for all $u\in W_{0}^{1,p(\cdot)}(\Omega)$

\[
\|u\|_{p(\cdot),\Omega}\leq C\|\nabla u\|_{p(\cdot),\Omega}
\]
with a constant $C$ independent of $u(x)$.
The dual of $W_0^{1,p(\cdot)}(\Omega)$, denoted $W'(\Omega)$, is the set of bounded linear functionals over
$W_{0}^{1,p(\cdot)}(\Omega)$: $\Phi \in W'(\Omega)$ iff there
exist $\Phi_0 \in L^{p'(\cdot)}(\Omega), \Phi_i \in L^{p'(\cdot
)}(\Omega)$, $i=1,\ldots,N$, such that for all $u \in
W_0^{1,p(\cdot)}(\Omega)$

\[
\langle \Phi, u \rangle= \int_{\Omega} \left( u \Phi_0 +
\sum_{i=1}^{N} u_{x_i} \cdot \Phi_i\right) ~dx .
\]

For the study of parabolic problems we need the spaces of functions depending on $z\equiv (x,t)\in Q$. Let us define the spaces

\[
\begin{split}
 & V_t(\Omega) = \{u: \Omega \to \mathbb{R}\ |\  u \in L^2(\Omega)
\cap W_0^{1,1}(\Omega), |\nabla u|^{p(x,t)} \in L^{1}(\Omega)
\},\;\; t\in (0,T),
\\
& \mathbf{W}_{p(\cdot)}(Q)= \{u : (0,T) \to V_t(\Omega) \ |\ u \in L^2(Q), |\nabla
u|^{p(x,t)} \in L^1(Q)\}.
\end{split}
\]
The space $\mathbf{W}_{p(\cdot)}(Q)$ is equipped with norm

\[
\|u\|_{\mathbf{W}_p(\cdot)(Q)}=\|u\|_{2,Q}+\|\nabla u\|_{p(\cdot),Q}.
\]
The dual $\mathbf{W}'_{p(\cdot)}(Q)$ of the space $\mathbf{W}_{p(\cdot)}(Q)$ is the set of bounded linear functionals over $\mathbf{W}_{p(\cdot)}(Q)$:
$\Phi \in \mathbf{W}'_{p(\cdot)}(Q)$ iff there exists $\Phi_0\in L^{2}(Q)$, $\Phi_i \in
L^{p'(\cdot)}(Q)$, $i=1,\ldots,n$,  such that for all $u \in
\mathbf{W}_{p(\cdot)}(Q)$

\[
\langle \Phi, u \rangle= \int_{Q} \left( u \Phi_0 +
\sum_{i=1}^{n}u_{x_i} \Phi_i\right) ~dz.
\]
Let $C_{{\rm log}}(\overline{Q})$ be the set of functions satisfying condition \eqref{eq:log-cont} in the closure of the cylinder $Q$. If $u\in \mathbf{W}_{p(\cdot)}(Q)$, $u_t\in \mathbf{W}'_{p(\cdot)}(Q)$, and $p(z)\in C_{{\rm log}}(\overline{Q})$, then

\begin{equation}
\label{eq:int-by-parts}
\int_{Q}u u_t\,dz = \frac{1}{2} \int_{\Omega}u^{2}(x,t)\,dx\Big|_{t=0}^{t=T}.
\end{equation}

By convention, throughout the text $C$ denotes the constants which can be computed or estimated through the data but whose exact values are  unimportant. The symbol $C$ may be used for different constants inside the same formula, and may even vary from line to line.

Whenever it does not cause a confusion, we omit the arguments of the variable exponents of nonlinearity.

We repeatedly use the Young inequality in the form: for all $a, b>0$ and $r>1$
\[
a\cdot b \leq \dfrac{1}{r}a^r+\dfrac{1}{r'}b^{r'}\leq a^r+b^{r'}, \quad r'=\dfrac{r}{r-1}.
\]
In particular, for $a=1$, $b=c^{s}$ with $c>0$ and $r>s>1$ we have
$
c^{s}\leq 1+c^{r}
$.

\section{Evolution $p(z)$-Laplace equation}
In this section we study the problem

\label{sec:p-Laplace}
\begin{equation}
\label{eq:main}
\begin{cases}
& u_t-\operatorname{div}\left(|\nabla u|^{p(z)-2}\nabla u\right)=f(z)\quad \text{in $Q=\Omega\times (0,T)$},
\\
& \text{$u=0$ on $\partial \Omega\times (0,T)$},
\\
& \text{$u(x,0)=u_0(x)$ in $\Omega$},
\end{cases}
\end{equation}
where $\Omega\subset\mathbb{R}^n$, $n\geq 2$, is a domain with the boundary $\partial\Omega\in C^2$, the height $T$ of the cylinder $Q$ is an arbitrary finite number, $f(z)$ and $p(z)$ are given functions whose properties will be specified later.

\subsection{Weak and strong solutions}
\begin{definition}
\label{def:strong-sol}
A function $u:Q\mapsto \mathbb{R}$ is called \textbf{weak solution} of problem \eqref{eq:main} if

\begin{itemize}
\item[a)] $u\in C^{0}([0,T];L^2(\Omega))\cap \mathbf{W}_p(Q)$,  $u_t\in (\mathbf{W}_p(Q))'$;

    \item[b)] for every $\phi\in \mathbf{W}_p(Q)$,

    \begin{equation}
    \label{eq:def}
\int_Q\left(u_t\phi+|\nabla u|^{p-2}\nabla u\cdot \nabla \phi\right)\,dz=\int_Qf\phi\,dz;
\end{equation}
\item[c)] for every $\phi\in L^2(\Omega)$

    \[
    \int_{\Omega}(u(x,t)-u_0)\phi(x)\,dx\to 0\quad\text{as $t\to 0^+$};
    \]
    \item[d)] a weak solution is called \textbf{strong solution} if

        \[
        u_t\in L^{2}(Q),\qquad |\nabla u|^{p(z)}\in  L^{\infty}(0,T;L^{1}(\Omega)).
        \]
\end{itemize}
\end{definition}

We will need the following known assertions on the existence and uniqueness of weak and strong solutions to problem \eqref{eq:main}

\begin{theorem}[\cite{AntShm-2009,DNR-2012}]
\label{th:weak-sol}
Let $\partial \Omega\in {\rm Lip}$, $p(z)\in C_{{\rm log}}(\overline{Q})$, and

\begin{equation}
\label{eq:exponent-0}
\dfrac{2n}{n+2}<p^-\leq p(z)\leq p^+<\infty,\quad p^\pm=const.
\end{equation}
Then for every $u_0\in L^{2}(\Omega)$ and $f\in L^{2}(Q)$ problem \eqref{eq:main} has a unique weak solution such that

\begin{equation}
\label{eq:energy-weak}
\operatorname{ess}\sup_{(0,T)}\|u(t)\|_{2,\Omega}^2+\int_{Q_T}|\nabla u|^{p(z)}\,dz\leq C\left(\|u_0\|_{2,\Omega}^2 +\|f\|_{2,Q}^2\right)
\end{equation}
with a constant $C$ depending on $n$ and $T$.
\end{theorem}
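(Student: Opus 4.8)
The statement is the standard existence--uniqueness result for the evolution $p(z)$-Laplacian, and the natural route is the Galerkin method combined with a monotonicity (Minty--Browder) argument. I would first fix a sequence $\{w_j\}_{j\ge1}\subset C_0^\infty(\Omega)$, orthonormal in $L^2(\Omega)$ (for instance obtained by Gram--Schmidt orthonormalisation of a countable dense subset of $C_0^\infty(\Omega)$), whose finite linear spans are dense in the energy space $\mathbf{W}_{p(\cdot)}(Q)$; since $p^+<\infty$, membership in $W_0^{1,p^+}(\Omega)$ suffices. The crucial structural hypothesis is $p^->\tfrac{2n}{n+2}$: it is exactly the threshold for the Sobolev embedding $W_0^{1,p^-}(\Omega)\hookrightarrow L^2(\Omega)$, with the embedding being compact, which is what later makes an Aubin--Lions type compactness lemma available. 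Seeking $u_m(x,t)=\sum_{j=1}^m c_j^m(t)w_j(x)$ that solves the projection of \eqref{eq:main} onto $\operatorname{span}\{w_1,\dots,w_m\}$, with $u_m(\cdot,0)$ the $L^2$-projection of $u_0$, reduces the problem to an ODE system for the $c_j^m$, locally solvable by Carathéodory's theorem.

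Next I would derive the a priori bounds. Testing the projected equation with $u_m$ yields the identity
\[
\frac12\frac{d}{dt}\|u_m(t)\|_{2,\Omega}^2+\int_\Omega|\nabla u_m|^{p(z)}\,dx=\int_\Omega f\,u_m\,dx,
\]
and, after Young's inequality on the right-hand side and Gronwall's lemma, the uniform bound
\[
\operatorname{ess}\sup_{(0,T)}\|u_m(t)\|_{2,\Omega}^2+\int_Q|\nabla u_m|^{p(z)}\,dz\le C(n,T)\bigl(\|u_0\|_{2,\Omega}^2+\|f\|_{2,Q}^2\bigr),
\]
which is \eqref{eq:energy-weak} at the approximate level and, in particular, gives global solvability of the ODE system. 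From this modular bound and \eqref{eq:mod} the field $|\nabla u_m|^{p-2}\nabla u_m$ is bounded in $L^{p'(\cdot)}(Q)$, since $\bigl||\nabla u_m|^{p-2}\nabla u_m\bigr|^{p'}=|\nabla u_m|^{p(z)}$; combined with $f\in L^2(Q)$ and a uniform bound for the $L^2$-projections onto the Galerkin spaces, this bounds $(u_m)_t$ in the dual $\mathbf{W}'_{p(\cdot)}(Q)$.

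Then I would pass to the limit. By reflexivity of the variable-exponent spaces a subsequence satisfies $u_m\rightharpoonup u$ weakly-$*$ in $L^\infty(0,T;L^2(\Omega))$, $\nabla u_m\rightharpoonup\nabla u$ in $L^{p(\cdot)}(Q)$, $|\nabla u_m|^{p-2}\nabla u_m\rightharpoonup\chi$ in $L^{p'(\cdot)}(Q)$, and $(u_m)_t\rightharpoonup u_t$ in $\mathbf{W}'_{p(\cdot)}(Q)$; the compact embedding $W_0^{1,p^-}(\Omega)\hookrightarrow\hookrightarrow L^2(\Omega)$ and the bound on $(u_m)_t$ give, via Aubin--Lions, strong convergence $u_m\to u$ in $L^2(Q)$ and a.e. in $Q$. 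The limit satisfies \eqref{eq:def} with $|\nabla u|^{p-2}\nabla u$ replaced by $\chi$, while $u\in C([0,T];L^2(\Omega))$ and the attainment of the initial datum follow from $u\in\mathbf{W}_{p(\cdot)}(Q)$, $u_t\in\mathbf{W}'_{p(\cdot)}(Q)$ and the integration-by-parts formula \eqref{eq:int-by-parts}. I expect the main obstacle to be the identification $\chi=|\nabla u|^{p-2}\nabla u$, carried out by the Minty--Browder trick: testing the approximate equation with $u_m$ and using \eqref{eq:int-by-parts} expresses $\int_Q|\nabla u_m|^{p-2}\nabla u_m\cdot\nabla u_m\,dz$ through $\int_Q f u_m\,dz$ and the endpoint $L^2$-norms; passing to the limit (strong $L^2(Q)$ convergence of $u_m$, weak lower semicontinuity of $\|u_m(\cdot,T)\|_{2,\Omega}$, convergence of the initial energies) gives $\limsup_m\int_Q|\nabla u_m|^{p(z)}\,dz\le\int_Q\chi\cdot\nabla u\,dz$, after which the monotonicity of $\xi\mapsto|\xi|^{p-2}\xi$ tested against $v=u\pm\lambda w$ forces $\chi=|\nabla u|^{p-2}\nabla u$. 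The delicate point is the terminal-time term, which requires $u_m(\cdot,T)\rightharpoonup u(\cdot,T)$ in $L^2(\Omega)$, i.e. the time-continuity established just above.

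Finally, uniqueness follows from strict monotonicity. Given two weak solutions $u_1,u_2$ with the same data, subtracting \eqref{eq:def} and testing with $w=u_1-u_2\in\mathbf{W}_{p(\cdot)}(Q)$, the formula \eqref{eq:int-by-parts} gives
\[
\frac12\|w(t)\|_{2,\Omega}^2+\int_0^t\int_\Omega\bigl(|\nabla u_1|^{p-2}\nabla u_1-|\nabla u_2|^{p-2}\nabla u_2\bigr)\cdot(\nabla u_1-\nabla u_2)\,dx\,ds=0,
\]
and since the integrand is nonnegative and $w(\cdot,0)=0$, we conclude $w\equiv0$. The estimate \eqref{eq:energy-weak} for the solution itself is inherited from the approximate bound through weak lower semicontinuity of the $L^2$-norm and of the modular $\int_Q|\nabla u|^{p(z)}\,dz$.
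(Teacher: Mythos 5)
The paper does not prove this theorem: it is quoted from \cite{AntShm-2009,DNR-2012}, where the argument is precisely the Galerkin--Minty--Browder scheme you describe, with the log-continuity of $p$ entering through the density of smooth functions in $\mathbf{W}_{p(\cdot)}(Q)$ and the integration-by-parts formula \eqref{eq:int-by-parts}. Your outline is correct and matches that standard route, including the correct identification of $p^->\frac{2n}{n+2}$ as the threshold for the compact embedding into $L^2(\Omega)$ needed for the Aubin--Lions step.
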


\begin{theorem}[\cite{ArShm-2021}]
\label{th:higher-integr}
Let $f\in L^{2}(0,T;W^{1,2}_0(\Omega))$ and $u_0\in W^{1,s(\cdot)}_0(\Omega)$ with $s(x)=\max\{2,p(x,0)\}$. Assume that $\partial \Omega\in C^2$ and

\begin{equation}
\label{eq:exponent}
\operatorname{ess}\sup_{Q}|\nabla p|+\operatorname{ess}\sup_{Q}|p_t|=L<\infty.
\end{equation}
Then the weak solution $u(z)$ of problem \eqref{eq:main} is the strong solution and the following estimate holds:

\begin{equation}
\label{eq:energy-strong}
\|u_t\|_{2,Q}^2+\operatorname{ess}\sup_{(0,T)}\int_{\Omega}|\nabla u|^{s(z)}\,dz\leq C, \quad s(z)=\max\{2,p(z)\},
\end{equation}
with $C=C\left(n,T,\partial \Omega,p^\pm,\|u_0\|_{W^{1,s(\cdot)}_0(\Omega)},\|f\|_{L^2(0,T;W^{1,2}_{0}(\Omega))}\right)$.
Moreover, for every

\[
\delta\in (0,r^\ast),\qquad r^\ast=\dfrac{4p^-}{p^-(n+2)+2n},
\]
there is a constant $C$, depending on the same quantities as the constant in \eqref{eq:energy-strong} and $\delta$, such that
\begin{equation}
\label{eq:higher-integr}
\int_Q |\nabla u|^{p(z)+\delta}\,dz\leq C.
\end{equation}
\end{theorem}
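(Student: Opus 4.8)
The plan is to obtain the extra regularity by a priori estimates on a family of nondegenerate approximations and then to transfer them to the weak solution of \eqref{eq:main}, which is unique by Theorem \ref{th:weak-sol}. First I would mollify the exponent $p$ and the data, replace the degenerate coefficient $|\nabla u|^{p(z)-2}$ by $\bigl(\mu^{2}+|\nabla u|^{2}\bigr)^{(p(z)-2)/2}$ with $\mu\to0$, and solve the regularized problems by Galerkin's method, so that testing with $u_t$ and with $-\Delta u$ becomes admissible. All bounds below are derived uniformly in the regularization parameters and pass to the limit by monotonicity of the flux and the compactness provided by $u_t\in L^2(Q)$; by uniqueness the limit is the weak solution $u$, which thus inherits \eqref{eq:energy-strong} and \eqref{eq:higher-integr}. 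The hypotheses enter as follows: $u_0\in W^{1,s(\cdot)}_0(\Omega)$ makes the initial energies $\int_\Omega|\nabla u_0|^{p(\cdot,0)}$ and $\int_\Omega|\nabla u_0|^2$ finite, $f\in L^2(0,T;W^{1,2}_0(\Omega))$ controls the source terms in both estimates, $\partial\Omega\in C^2$ controls the boundary integrals in the global second-order estimate, and \eqref{eq:exponent} bounds the coefficients of the logarithmic terms described below.

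For the bound on $u_t$ I would test the equation with $u_t$. The only delicate term is $\int_Q|\nabla u|^{p-2}\nabla u\cdot\nabla u_t$, which I would rewrite using
\[
|\nabla u|^{p-2}\nabla u\cdot\nabla u_t=\frac{d}{dt}\Bigl(\frac{|\nabla u|^{p}}{p}\Bigr)-p_t\,|\nabla u|^{p}\Bigl(\frac{\ln|\nabla u|}{p}-\frac1{p^{2}}\Bigr).
\]
After integration over $Q$, absorbing $\int_Q f\,u_t$ by Young's inequality and estimating the initial energy by $\|u_0\|_{W^{1,s(\cdot)}_0(\Omega)}$, this yields
\[
\int_Q u_t^{2}\,dz+\operatorname{ess}\sup_{(0,T)}\int_\Omega\frac{|\nabla u|^{p}}{p}\,dx\le C+L\int_Q|\nabla u|^{p}\,\frac{\bigl|\ln|\nabla u|\bigr|}{p}\,dz .
\]
The term carrying $\ln|\nabla u|$ is the crux of the whole argument: with $|\nabla u|^{p}\bigl|\ln|\nabla u|\bigr|\le C_\varepsilon\bigl(|\nabla u|^{p-\varepsilon}+|\nabla u|^{p+\varepsilon}\bigr)$ it is dominated by $CL\int_Q\bigl(1+|\nabla u|^{p+\varepsilon}\bigr)\,dz$, i.e. exactly by the higher-integrability quantity \eqref{eq:higher-integr}.

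To reach the exponent $2$ in \eqref{eq:energy-strong} and the spatial second-order information needed for \eqref{eq:higher-integr}, I would differentiate the equation in $x$ (equivalently test with $-\Delta u$). Integrating by parts and using $\partial\Omega\in C^2$ to absorb the boundary terms and $f\in L^2(0,T;W^{1,2}_0(\Omega))$ for the source, this produces
\[
\operatorname{ess}\sup_{(0,T)}\int_\Omega|\nabla u|^{2}\,dx+\int_Q\bigl|\nabla\bigl(|\nabla u|^{(p-2)/2}\nabla u\bigr)\bigr|^{2}\,dz\le C,
\]
where differentiating the coefficient now generates a second logarithmic term weighted by $\nabla p$, again bounded by $L$ and of the same type as above. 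Adding the two estimates gives $\operatorname{ess}\sup_{(0,T)}\int_\Omega|\nabla u|^{s(z)}\,dx\le C$ with $s=\max\{2,p\}$, since $|\nabla u|^{s}\le|\nabla u|^{2}+|\nabla u|^{p}$.

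Finally, setting $g:=|\nabla u|^{p/2}$, the second-order estimate furnishes $\nabla g\in L^{2}(Q)$ while the uniform-in-time bound furnishes $g\in L^{\infty}(0,T;L^{\ell}(\Omega))$ for an admissible $\ell$; a parabolic Gagliardo--Nirenberg interpolation between these then gives $g\in L^{2+\kappa}(Q)$, that is $\int_Q|\nabla u|^{p+\delta}\,dz\le C$. The admissible range $\delta\in(0,r^{*})$ is dictated precisely by the requirement that in this interpolation the highest-order norm $\|\nabla g\|_{L^2(Q)}$ enter with an exponent strictly below one, which is what makes Young's inequality applicable. The main obstacle is the circular dependence I have flagged: the estimates that yield \eqref{eq:higher-integr} themselves rely on absorbing the two logarithmic terms, whose control requires \eqref{eq:higher-integr}. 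I would break the circle on the regularized level by a single combined inequality of the form $\mathcal E\le C+CL\,\Xi$ with $\mathcal E$ the sum of the left-hand sides above and $\Xi=\int_Q|\nabla u|^{p+\varepsilon}\,dz$, together with the interpolation bound $\Xi\le\eta\,\mathcal E+C_\eta$ valid for $\varepsilon<r^{*}$ and every $\eta>0$; choosing $\eta$ small relative to $L$ closes both \eqref{eq:energy-strong} and \eqref{eq:higher-integr} simultaneously.
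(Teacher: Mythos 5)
The paper does not prove this statement: Theorem \ref{th:higher-integr} is imported verbatim from \cite{ArShm-2021}, so there is no in-paper proof to compare against. Your outline reproduces the strategy of that reference in its essential elements --- nondegenerate regularization and Galerkin approximation, testing with $u_t$ together with the time-differentiation identity for $|\nabla u|^{p}/p$, a global second-order estimate using $\partial\Omega\in C^2$, absorption of the logarithmic terms via the bound $L$ in \eqref{eq:exponent}, and a Gagliardo--Nirenberg interpolation for $|\nabla u|^{p/2}$ closing the circle between \eqref{eq:energy-strong} and \eqref{eq:higher-integr} --- though the quantitative step that produces the precise threshold $r^\ast=\frac{4p^-}{p^-(n+2)+2n}$ is asserted rather than carried out and would need to be made explicit for a complete argument.
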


The constants in \eqref{eq:energy-strong},  \eqref{eq:higher-integr} depend on $\partial\Omega$ through the norm of the parametrization of $\partial\Omega$ in the local coordinate system.

\subsection{Continuity with respect to the data}
Let us assume that we are given two sets of data:

\begin{equation}
\label{eq:assump-weak-strong}
\begin{split}
& \text{$\mathcal{S}_1\equiv \{p(z), f_1(z), u_0(x)\}$ satisfy the conditions of Theorem \ref{th:higher-integr}},
\\
& \text{$\mathcal{S}_2\equiv \{q(z), f_2(z), v_0(x)\}$ satisfy the conditions of Theorem \ref{th:weak-sol}}.
\end{split}
\end{equation}
By Theorems \ref{th:weak-sol}, \ref{th:higher-integr}, the corresponding solutions $u(z)$ and $v(z)$ are the unique strong and weak solutions of problem \eqref{eq:main} in the sense of Definition \ref{def:strong-sol}.

Let us assume that there exists a constant $\lambda\in (0,r^\ast)$ such that

\begin{equation}
\label{eq:proximity}
-(q(z)-1)(r^\ast-\lambda)\leq q(z)-p(z)\leq r^\ast-\lambda\qquad \forall z\in \overline Q.
\end{equation}
Accept the notation

\[
\begin{split}
&
\mathcal{R}=\|u_0-v_0\|^2_{2,\Omega}+\|f_1-f_2\|_{2,Q}^2+\sup_{Q}|p-q|^{q'}.
\end{split}
\]

The main result is the estimate on the difference between the strong and the weak solutions in the norm of $\mathbf{W}_{q(\cdot)}(Q)$.
\begin{theorem}
\label{th:stab-1}
Assume that $\partial\Omega\in C^2$ and conditions \eqref{eq:assump-weak-strong}, \eqref{eq:proximity} are fulfilled for the sets of data $\{p,f_1,u_0\}$ and $\{q,f_2,v_0\}$. Let $u(z)$, $v(z)$ be the corresponding strong and weak solutions of problem \eqref{eq:main}. Then

\begin{equation}
\label{eq:stab-1}
\operatorname{ess}\sup_{(0,T)}\|u-v\|_{2,\Omega}^{2}(t) + \int_{Q}|\nabla (u-v)|^{q(z)}\,dz \leq C\left(\mathcal{R}+\mathcal{R}^{\frac{q^+}{2}}+\mathcal{R}^{\frac{q^-}{2}}\right)
\end{equation}
with a constant $C$ depending on $T$, $n$, $\partial \Omega$, $p^\pm$, $q^\pm$, $\lambda$, $L$,
$\|f_1\|_{L^2(0,T;W^{1,2}_0(\Omega))}$, $\|f_2\|_{L^2(Q)}$, $\|u_0\|_{W^{1,s(\cdot)}_0(\Omega)}$, $\|v_0\|_{2,\Omega}$.
\end{theorem}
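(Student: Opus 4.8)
The plan is to derive an energy identity for the difference $w=u-v$ by subtracting the weak formulations of the two problems and testing with $w$ itself, and then to estimate each resulting term. First I would check that $w$ is an admissible test function in both identities: since $u$ is a strong solution, Theorem \ref{th:higher-integr} gives $|\nabla u|\in L^{p(\cdot)+\delta}(Q)$, and the right-hand inequality in the proximity condition \eqref{eq:proximity} is designed precisely so that $p(z)+\delta\ge q(z)$ pointwise for a suitable $\delta<r^{\ast}$, whence $\nabla u\in L^{q(\cdot)}(Q)$ and $u\in\mathbf{W}_{q(\cdot)}(Q)$; thus $w\in\mathbf{W}_{q(\cdot)}(Q)$ and the flux $|\nabla u|^{p-2}\nabla u$ may legitimately be paired with $\nabla w$. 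Running the identity on a generic interval $(0,t)$ and using the integration-by-parts formula \eqref{eq:int-by-parts}, I obtain, with $Q_t=\Omega\times(0,t)$,
\[
\tfrac12\|w(t)\|_{2,\Omega}^2+\int_{Q_t}\bigl(|\nabla u|^{p-2}\nabla u-|\nabla v|^{q-2}\nabla v\bigr)\cdot\nabla w\,dz=\tfrac12\|u_0-v_0\|_{2,\Omega}^2+\int_{Q_t}(f_1-f_2)w\,dz .
\]

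The core is the flux term, which I would split as
\[
\bigl(|\nabla u|^{p-2}\nabla u-|\nabla v|^{q-2}\nabla v\bigr)\cdot\nabla w=\bigl(|\nabla u|^{q-2}\nabla u-|\nabla v|^{q-2}\nabla v\bigr)\cdot\nabla w+\bigl(|\nabla u|^{p-2}-|\nabla u|^{q-2}\bigr)\nabla u\cdot\nabla w,
\]
isolating a monotone part in the single exponent $q$ and an error part carrying the exponent gap $p-q$. For the monotone part I use the standard vector inequalities, distinguishing pointwise the degenerate region $\{q\ge2\}$, where it is bounded below by $c|\nabla w|^{q}$, from the singular region $\{1<q<2\}$, where it dominates only the weighted quantity $|\nabla w|^2(|\nabla u|+|\nabla v|)^{q-2}$; this localisation is forced by the variable exponent and by the fact that the degeneracy and singularity sets of the two equations need not coincide. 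On the singular region I recover $\int|\nabla w|^{q}$ by writing $|\nabla w|^{q}=\bigl(|\nabla w|^2(|\nabla u|+|\nabla v|)^{q-2}\bigr)^{q/2}(|\nabla u|+|\nabla v|)^{q(2-q)/2}$ and applying the (variable-exponent) Hölder inequality with exponents $2/q$, $2/(2-q)$; the second factor is controlled by the energy bounds \eqref{eq:energy-weak}, \eqref{eq:energy-strong} on $\nabla u$, $\nabla v$, and it is exactly this Hölder step, together with the passage between modular and norm via \eqref{eq:mod}, that produces the fractional powers $\mathcal{R}^{q^+/2}$, $\mathcal{R}^{q^-/2}$.

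For the error part I would apply the mean value theorem to $s\mapsto|\nabla u|^{s}$ to write $\bigl||\nabla u|^{p-1}-|\nabla u|^{q-1}\bigr|\le|p-q|\,|\nabla u|^{\theta}\bigl|\ln|\nabla u|\bigr|$ with $\theta$ between $p-1$ and $q-1$, absorb the logarithm by $s^{\theta}|\ln s|\le C_{\varepsilon}(s^{\theta-\varepsilon}+s^{\theta+\varepsilon})$, and then apply the Young inequality with exponents $q,q'$ to split off a factor $|p-q|^{q'}$ and a term $\varepsilon|\nabla w|^{q}$ to be absorbed on the left. The surviving gradient factor is an integral of $|\nabla u|$ to a power slightly exceeding $q$, and here the proximity condition \eqref{eq:proximity} guarantees that this power stays below $p+\delta$, so the integral is finite by the higher-integrability estimate \eqref{eq:higher-integr}; this furnishes the $\sup_Q|p-q|^{q'}$ contribution to $\mathcal{R}$. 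The source term is handled by Hölder and Young, $\int_{Q_t}(f_1-f_2)w\le\tfrac12\|f_1-f_2\|_{2,Q}^2+\tfrac12\int_0^t\|w\|_{2,\Omega}^2\,d\tau$, the last term being removed by Gronwall's inequality, which accounts for the dependence of the constant on $T$. Collecting the estimates, absorbing the small multiples of $\int|\nabla w|^q$, taking the supremum over $t$, and converting modulars into norms through \eqref{eq:mod} yields \eqref{eq:stab-1}.

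I expect the main obstacle to be the singular regime $1<q<2$ combined with the variable, non-coinciding degeneracy/singularity sets: making the pointwise splitting of $Q$ rigorous, recovering $\int|\nabla w|^q$ through the Hölder trick while tracking the exponents $q^{\pm}$ in the modular-to-norm conversion, and simultaneously ensuring through \eqref{eq:proximity} that every gradient power invoked remains within the higher-integrability range $p+\delta$.
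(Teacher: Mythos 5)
Your overall architecture coincides with the paper's: the same energy identity for $w=u-v$ (justified, as in Lemma \ref{pro:test}, by the higher integrability \eqref{eq:higher-integr} and condition \eqref{eq:proximity}), the same splitting of the flux into a monotone part in the single exponent $q$ plus an error carrying $p-q$, the same decomposition of $Q$ into $\{q\geq 2\}$ and $\{q<2\}$ with the H\"older/modular step on the singular set producing the powers $\mathcal{R}^{q^{\pm}/2}$ (Lemma \ref{le:est-1}), and the same finite-increments treatment of $|\nabla u|^{p-1}-|\nabla u|^{q-1}$ with the logarithm absorbed into a small shift of the exponent kept inside the range $p+\delta$ (Lemma \ref{le:est-1b}). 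The paper dispenses with Gronwall by simply bounding $\|u-v\|_{2,Q}$ through the a priori estimates, but your Gronwall variant of the source-term estimate is equally valid.

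The one step that would fail as written is the absorption of the term $\varepsilon\int_Q|\nabla w|^{q}\,dz$, produced by your Young inequality in the error estimate, into the left-hand side. What sits on the left is $\mathcal{I}=\int_Q\left(|\nabla u|^{q-2}\nabla u-|\nabla v|^{q-2}\nabla v\right)\cdot\nabla w\,dz$, and this dominates $c\int|\nabla w|^{q}$ only on the degenerate set $\{q\geq2\}$. On the singular set the relation between $\mathcal{I}$ and $\int|\nabla w|^{q}$ is precisely the nonlinear one you describe, $\int_{Q_-}|\nabla w|^{q}\leq C\max\{\mathcal{I}^{q^-/2},\mathcal{I}^{q^+/2}\}$, so $\varepsilon\int_{Q_-}|\nabla w|^{q}$ is not a small multiple of anything appearing linearly on the left; linearizing it by a further Young inequality introduces an additive constant independent of the data discrepancy, which destroys the stability estimate (the right-hand side of \eqref{eq:stab-1} must vanish with $\mathcal{R}$). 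The paper's Lemma \ref{le:est-1b} avoids this entirely: estimate the error term by the generalized H\"older inequality, $j_3\leq 2\left\|(|\nabla u|^{q-2}-|\nabla u|^{p-2})\nabla u\right\|_{q'(\cdot),Q}\|\nabla(u-v)\|_{q(\cdot),Q}$, and observe that $\|\nabla(u-v)\|_{q(\cdot),Q}$ is bounded a priori by the data through \eqref{eq:energy-weak}, \eqref{eq:energy-strong} and \eqref{eq:higher-integr}, so the whole term is at most $C\sup_Q|p-q|^{q'}$ with no absorption required. With that single replacement your argument closes and is otherwise the paper's proof.
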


\begin{lemma}
\label{pro:test}
Let the conditions of Theorem \ref{th:stab-1} be fulfilled. If $u(x)$ is the strong solution of problem \eqref{eq:main}, then
\[
|\nabla u|\in L^{q(\cdot)}(Q),\qquad |\nabla u|^{p(z)-1}\in L^{q'(\cdot)}(Q).
\]
\end{lemma}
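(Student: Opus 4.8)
The plan is to deduce both membership statements directly from the global higher integrability estimate \eqref{eq:higher-integr} of Theorem \ref{th:higher-integr}, by choosing $\delta=r^\ast-\lambda\in(0,r^\ast)$ and exploiting the two sides of the proximity condition \eqref{eq:proximity} separately. Since all exponents involved are bounded, membership in a variable Lebesgue space is equivalent to finiteness of the corresponding modular, so it suffices to bound the two integrals $\int_Q|\nabla u|^{q(z)}\,dz$ and $\int_Q|\nabla u|^{(p(z)-1)q'(z)}\,dz$. The recurring device is the elementary splitting of $Q$ into $\{|\nabla u|<1\}$, where any positive power of $|\nabla u|$ is at most $1$, and $\{|\nabla u|\ge1\}$, where $t\mapsto t^{r}$ is increasing in $r$, so that a pointwise inequality between exponents transfers to the integrands.

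For the first claim I would use the upper bound in \eqref{eq:proximity}, which reads $q(z)\le p(z)+(r^\ast-\lambda)=p(z)+\delta$ for all $z\in\overline Q$. On $\{|\nabla u|\ge1\}$ this gives $|\nabla u|^{q(z)}\le|\nabla u|^{p(z)+\delta}$, while on $\{|\nabla u|<1\}$ the integrand is bounded by $1$. Hence
\[
\int_Q|\nabla u|^{q(z)}\,dz\le |Q|+\int_Q|\nabla u|^{p(z)+\delta}\,dz\le |Q|+C<\infty
\]
by \eqref{eq:higher-integr}, which yields $|\nabla u|\in L^{q(\cdot)}(Q)$.

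The second claim rests on the algebraic identity
\[
(p(z)-1)\,q'(z)=(p(z)-1)\,\frac{q(z)}{q(z)-1}=p(z)+\frac{p(z)-q(z)}{q(z)-1}.
\]
The left inequality in \eqref{eq:proximity} is exactly $p(z)-q(z)\le (q(z)-1)(r^\ast-\lambda)$, hence $\frac{p(z)-q(z)}{q(z)-1}\le r^\ast-\lambda=\delta$ and therefore $(p(z)-1)q'(z)\le p(z)+\delta$ throughout $\overline Q$. Repeating the same splitting argument gives $\int_Q|\nabla u|^{(p(z)-1)q'(z)}\,dz\le |Q|+C<\infty$, that is, $|\nabla u|^{p(z)-1}\in L^{q'(\cdot)}(Q)$.

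I do not expect a genuine analytic obstacle: the whole content of the lemma is the bookkeeping of exponents. The one point worth emphasizing is that the two halves of \eqref{eq:proximity} are calibrated precisely so that both relevant exponents, $q(z)$ and $(p(z)-1)q'(z)$, stay below the threshold $p(z)+r^\ast$ beyond which the higher integrability of the gradient is not available; this is what makes a single application of \eqref{eq:higher-integr} with $\delta=r^\ast-\lambda$ enough to cover both conclusions simultaneously.
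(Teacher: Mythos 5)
Your proposal is correct and follows essentially the same route as the paper: both arguments invoke the higher integrability estimate \eqref{eq:higher-integr} with $\delta=r^\ast-\lambda$ and reduce each inclusion to the pointwise exponent inequalities $q\le p+\delta$ and $(p-1)q'\le p+\delta$, which are exactly the two halves of \eqref{eq:proximity}; your split of $Q$ by $|\nabla u|\gtrless 1$ is just the Young-type inequality $c^s\le 1+c^r$ the paper uses. The only cosmetic difference is that you verify $(p-1)q'\le p+\delta$ via the identity $(p-1)q'=p+\frac{p-q}{q-1}$ rather than the paper's rearrangement of $\frac{p-1}{p+r^\ast-\lambda}\le 1-\frac1q$, which is the same computation in a slightly cleaner form.
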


\begin{proof} By \eqref{eq:higher-integr}, $|\nabla u|\in L^{p(\cdot)+\delta}(Q)$ for every $\delta<r^\ast$. The inclusion $|\nabla u|\in L^{q(\cdot)}(Q)$ follows from the second inequality in \eqref{eq:proximity} and Young's inequality because $q(z)\leq p(z)+r^\ast-\lambda$: for $\delta=r^\ast-\lambda$
\[
\int_{Q}|\nabla u|^{q(z)}\,dz\leq \int_Q\left(1+|\nabla u|^{p(z)+\delta}\right)\,dz\leq |Q|+\int_Q|\nabla u|^{p(z)+\delta}\,dz.
\]
The inclusion $|\nabla u|^{p-1}\in L^{q'}(Q)$ is fulfilled if $(p-1)q'\leq p+r^\ast-\lambda$. This inequality can be written in the form
\[
\dfrac{p-1}{p+r^\ast-\lambda} =\dfrac{(p+r^\ast-\lambda)-(1+r^\ast-\lambda)}{p+r^\ast-\lambda} \leq 1-\dfrac{1}{q}\quad \Leftrightarrow\quad \dfrac{1}{q}\leq \dfrac{1+r^\ast-\lambda}{p+r^\ast-\lambda},
\]
which is equivalent to the first inequality in \eqref{eq:proximity}.
\end{proof}

\begin{lemma}
\label{pro:1}
Under the conditions of Theorem \ref{th:stab-1}, the following energy relation holds:

\begin{equation}
\label{eq:energy-w-s}
\begin{split}
\int_{Q}((u-v)_t(u-v))\,dz & + \int_Q\left(|\nabla u|^{p-2}\nabla u-|\nabla v|^{q-2}\nabla v\right)\cdot \nabla (u-v)\,dz
=\int_Q (f_1-f_2)(u-v)\,dz.
\end{split}
\end{equation}
\end{lemma}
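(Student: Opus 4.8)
The plan is to derive the energy relation \eqref{eq:energy-w-s} by subtracting the weak formulations for $u$ and $v$ and testing with the difference $u-v$. The fundamental obstacle is that the two solutions live in different energy spaces: $u$ is a strong solution associated with the exponent $p(z)$ and thus belongs to $\mathbf{W}_{p(\cdot)}(Q)$, while $v$ is a weak solution associated with $q(z)$ and belongs to $\mathbf{W}_{q(\cdot)}(Q)$. The definition \eqref{eq:def} allows only test functions from the energy space matching each solution's own exponent, so one cannot immediately use $u-v$ as a test function in either identity. The first task is therefore to justify that $u-v$ is an admissible test function in both weak formulations simultaneously.

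First I would record the weak formulations. For the strong solution $u$ with exponent $p(z)$ and source $f_1$, equation \eqref{eq:def} reads $\int_Q\bigl(u_t\phi+|\nabla u|^{p-2}\nabla u\cdot\nabla\phi\bigr)\,dz=\int_Q f_1\phi\,dz$ for every admissible $\phi$, and analogously for $v$ with exponent $q(z)$ and source $f_2$. I would then verify the admissibility of $\phi=u-v$. Since $v$ is a weak solution for $q(z)$, we have $v\in\mathbf{W}_{q(\cdot)}(Q)$, so $|\nabla v|\in L^{q(\cdot)}(Q)$ and $v$ is legitimately testable against the $v$-equation. For the $u$-equation, the issue is whether $|\nabla u|^{p-2}\nabla u\cdot\nabla(u-v)$ and $|\nabla v|^{q-2}\nabla v\cdot\nabla(u-v)$ are integrable. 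Here Lemma \ref{pro:test} is the key input: it guarantees $|\nabla u|\in L^{q(\cdot)}(Q)$ and $|\nabla u|^{p-1}\in L^{q'(\cdot)}(Q)$ under condition \eqref{eq:proximity}. Together with the generalized Hölder inequality \eqref{eq:Holder}, these inclusions ensure that the flux $|\nabla u|^{p-2}\nabla u$ pairs integrably with $\nabla v\in L^{q(\cdot)}(Q)$, and symmetrically that $|\nabla v|^{q-2}\nabla v\in L^{q'(\cdot)}(Q)$ pairs with $\nabla u\in L^{q(\cdot)}(Q)$. Thus every term in the tested identities is finite, so $u-v$ lies in $\mathbf{W}_{q(\cdot)}(Q)$ and is a permissible test function.

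Having secured admissibility, I would write the identity for $u$ with $\phi=u-v$ and the identity for $v$ with $\phi=u-v$, then subtract the second from the first. The source terms combine into $\int_Q(f_1-f_2)(u-v)\,dz$, the time-derivative terms combine into $\int_Q(u_t-v_t)(u-v)\,dz=\int_Q(u-v)_t(u-v)\,dz$, and the diffusion terms combine into $\int_Q\bigl(|\nabla u|^{p-2}\nabla u-|\nabla v|^{q-2}\nabla v\bigr)\cdot\nabla(u-v)\,dz$, which is exactly \eqref{eq:energy-w-s}. The one subtlety remaining is that for the strong solution $u$ we have $u_t\in L^2(Q)$ by Definition \ref{def:strong-sol}(d), whereas for the weak solution $v$ only $v_t\in\mathbf{W}'_{q(\cdot)}(Q)$; however, $u-v\in\mathbf{W}_{q(\cdot)}(Q)$ is precisely the space paired with this dual, so the duality pairing $\langle(u-v)_t,u-v\rangle$ is well defined. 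No integration by parts in time is needed to state \eqref{eq:energy-w-s}, so the identity follows directly once admissibility is established. I expect the admissibility verification via Lemma \ref{pro:test} to be the crux; the remainder is bookkeeping in subtracting two identities.
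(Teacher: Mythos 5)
Your overall strategy coincides with the paper's: both arguments hinge on Lemma \ref{pro:test} to place $u$ in $\mathbf{W}_{q(\cdot)}(Q)$ and the flux $|\nabla u|^{p-1}$ in $L^{q'(\cdot)}(Q)$, and then combine the two weak formulations tested against $u-v$. There is, however, one step where your reasoning as written does not go through. You argue that because every integral in the tested identities is finite, ``$u-v$ \ldots is a permissible test function.'' For the $v$-equation this is fine: $u-v\in\mathbf{W}_{q(\cdot)}(Q)$, which is exactly the class of test functions admitted by Definition \ref{def:strong-sol}(b) for that equation. But for the $u$-equation, identity \eqref{eq:def} is only postulated for $\phi\in\mathbf{W}_{p(\cdot)}(Q)$, and $u-v$ need not lie in that space: wherever $p(z)>q(z)$, the weak solution $v$ gives no control of $\int_Q|\nabla v|^{p(z)}\,dz$, so $\nabla(u-v)$ may fail to belong to $L^{p(\cdot)}(Q)$. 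Finiteness of $\int_Q|\nabla u|^{p-2}\nabla u\cdot\nabla(u-v)\,dz$ does not by itself entitle you to assert that the identity holds for this $\phi$; an identity valid on a subspace does not automatically extend to every function for which both sides happen to make sense.

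The paper closes this gap with a density argument: it chooses $\phi_k\in C^{1}([0,T];C_0^{1}(\Omega))$ with $\phi_k\to u-v$ in $\mathbf{W}_{q(\cdot)}(Q)$ and $(\phi_k)_t\to(u-v)_t$ in $\mathbf{W}'_{q(\cdot)}(Q)$, tests both formulations with $\phi_k$ (smooth functions are admissible for both exponents), combines them, and passes to the limit. The limit passage is precisely where the inclusions of Lemma \ref{pro:test} are used: $|\nabla u|^{p-1}\in L^{q'(\cdot)}(Q)$ and $u_t\in L^{2}(Q)\subset\mathbf{W}'_{q(\cdot)}(Q)$ make every functional in the $u$-identity continuous with respect to convergence in $\mathbf{W}_{q(\cdot)}(Q)$. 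So you have assembled all the right ingredients; what is missing is the approximation step that converts ``all the integrals are finite and stable under $\mathbf{W}_{q(\cdot)}$-convergence'' into ``the identity actually holds for $\phi=u-v$.''
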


\begin{proof}
By Theorem \ref{th:weak-sol} $v\in \mathbf{W}_q(Q)$, $v_t\in \mathbf{W}'_q(Q)$, by Theorem \ref{th:higher-integr} and Lemma \ref{pro:test} $u\in \mathbf{W}_s(Q)\subseteq \mathbf{W}_q(Q)$ with $s(z)=\max\{p(z),q(z)\}$, and $u_t\in L^2(Q)$. The function $u_t\in L^2(Q)$ can be identified as an element of $\mathbf{W}_{q}'(Q)$: for every $g\in \mathbf{W}_q(Q)$

\[
\left|\int_Q u_t g\,dz\right|\leq \|u_t\|_{2,Q}\|g\|_{2,Q}\leq \|u_t\|_{2,Q}\|g\|_{\mathbf{W}_q(Q)}.
\]
It follows that $u-v\in \mathbf{W}_q(Q)$, $(u-v)_t\in \mathbf{W}_q'(Q)$.
There is a sequence $\{\phi_k\}$ such that

\[
\phi_k\in C^{1}([0,T];C_0^{1}(\Omega)),\quad \text{$\phi_k\to u-v$ in $\mathbf{W}_q(Q)$},\quad \text{$(\phi_{k})_t\to (u-v)_t$ in $\mathbf{W}'_q(Q)$}.
\]
Taking $\phi_k$ for the test-function in identities \eqref{eq:def} for $u$ and $v$, combining the results, and then passing to the limit as $k\to \infty$ we obtain equality \eqref{eq:energy-w-s}.
\end{proof}

\subsection{Proof of Theorem \ref{th:stab-1}}
Integrating by parts in the first term and then rearranging the second term on the left-hand side, we rewrite \eqref{eq:energy-w-s} in the form
\begin{equation}
\label{eq:ident-1}
\begin{split}
\dfrac{1}{2}\|u-v\|_{2,\Omega}^{2}(t) & + \int_Q\left(|\nabla u|^{q-2}\nabla u-|\nabla v|^{q-2}\nabla v\right)\cdot \nabla (u-v)\,dz
\\
&
= \int_Q (f_1-f_2)(u-v)\,dz +\dfrac{1}{2}\|u_0-v_0\|^2_{2,\Omega}
+ \int_{Q}\left(|\nabla u|^{q-2}\nabla u-|\nabla u|^{p-2}\nabla u\right)\cdot \nabla (u-v)\,dz.
\end{split}
\end{equation}

Let us accept the notation

\[
\begin{split}
& Q_-=Q\cap \{z:\,q(z)<2\},\qquad Q_+=Q\setminus Q_-\equiv \{z:\,q(z)\geq 2\},
\\
& \mathcal{S}_q(u,v)=\left(|\nabla u|^{q-2}\nabla u-|\nabla v|^{q-2}\nabla v\right)\cdot \nabla (u-v),
\\
& \mathcal{I}_+=\int_{Q_+}S_q(u,v)\,dz,\quad \mathcal{I}_-=\int_{Q_-}S_q(u,v)\,dz,
\\
&
\mathcal{I}= \mathcal{I}_++\mathcal{I}_-\equiv \int_Q\left(|\nabla u|^{q-2}\nabla u-|\nabla v|^{q-2}\nabla v\right)\cdot \nabla (u-v)\,dz
\end{split}
\]
and rewrite equality \eqref{eq:ident-1} as follows:

\begin{equation}
\label{eq:ident-1-prim}
\begin{split}
\dfrac{1}{2}\|u-v\|_{2,\Omega}^{2}(t) & + \mathcal{I}= \int_Q (f_1-f_2)(u-v)\,dz +\dfrac{1}{2}\|u_0-v_0\|^2_{2,\Omega}
\\
& + \int_{Q}\left(|\nabla u|^{q-2}\nabla u-|\nabla u|^{p-2}\nabla u\right)\cdot \nabla (u-v)\,dz.
\end{split}
\end{equation}
\begin{lemma}
\label{le:est-1} Under the conditions of Theorem \ref{th:stab-1}

\begin{equation}
\label{eq:I-est}
\int_{Q} |\nabla (u-v)|^{q(z)}\,dz\leq C\left(\mathcal{I}+\mathcal{I}^{\frac{q^-}{2}} +\mathcal{I}^{\frac{q^+}{2}}\right)
\end{equation}
with a constant $C$ depending on $T$, $n$, $\partial \Omega$, $p^\pm$, $q^\pm$, $\lambda$, $L$,
$\|f_1\|_{L^2(0,T;W^{1,2}_0(\Omega))}$, $\|f_2\|_{L^2(Q)}$, $\|u_0\|_{W^{1,s(\cdot)}_0(\Omega)}$, $\|v_0\|_{2,\Omega}$.
\end{lemma}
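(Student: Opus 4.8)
The plan is to bound $\int_Q|\nabla(u-v)|^{q(z)}\,dz$ separately on $Q_+$ and $Q_-$ by comparing the integrand with the monotonicity quantity $\mathcal{S}_q(u,v)$, using the two classical algebraic inequalities for the vector field $\xi\mapsto|\xi|^{q-2}\xi$. Write $w=u-v$, $\xi=\nabla u$, $\eta=\nabla v$; since $q(z)\in[q^-,q^+]$ with $1<q^-\le q^+<\infty$, all constants below are uniform in $z$. On $Q_+$, where $q(z)\ge 2$, I would invoke
\[
\mathcal{S}_q(u,v)=\left(|\xi|^{q-2}\xi-|\eta|^{q-2}\eta\right)\cdot(\xi-\eta)\ge c\,|\xi-\eta|^{q},\qquad c=c(q^\pm)>0,
\]
which upon integration gives at once $\int_{Q_+}|\nabla w|^{q}\,dz\le C\,\mathcal{I}_+$.

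The region $Q_-$, where $1<q(z)<2$, is where the work lies. Here the appropriate monotonicity bound is the degenerate one,
\[
\mathcal{S}_q(u,v)\ge c\,\frac{|\xi-\eta|^2}{\left(|\xi|+|\eta|\right)^{2-q}},
\]
which I would recast as the pointwise inequality $|\xi-\eta|^{q}\le C\,\mathcal{S}_q(u,v)^{q/2}\left(|\xi|+|\eta|\right)^{q(2-q)/2}$, obtained by solving for $|\xi-\eta|^2$ and raising to the power $q/2$. Integrating over $Q_-$ and applying the generalized Hölder inequality \eqref{eq:Holder} in the variable-exponent space with the conjugate pair $r(z)=2/q(z)$, $r'(z)=2/(2-q(z))$ yields
\[
\int_{Q_-}|\nabla w|^{q}\,dz\le C\,\left\|\mathcal{S}_q(u,v)^{q/2}\right\|_{r(\cdot),Q_-}\left\|\left(|\nabla u|+|\nabla v|\right)^{q(2-q)/2}\right\|_{r'(\cdot),Q_-}.
\]

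To finish I would estimate the two factors. The second one is bounded by a data-dependent constant, since its modular equals $\int_{Q_-}(|\nabla u|+|\nabla v|)^{q}\,dz$, finite because $|\nabla u|\in L^{q(\cdot)}(Q)$ by Lemma \ref{pro:test} and $|\nabla v|\in L^{q(\cdot)}(Q)$ by the energy estimate \eqref{eq:energy-weak}; this is exactly where the dependence of $C$ on the energy norms of $u$ and $v$ enters. For the first factor, the modular of $\mathcal{S}_q(u,v)^{q/2}$ with exponent $r(z)=2/q(z)$ equals $\int_{Q_-}\mathcal{S}_q(u,v)\,dz=\mathcal{I}_-$, so the modular–norm relation \eqref{eq:mod} gives $\left\|\mathcal{S}_q(u,v)^{q/2}\right\|_{r(\cdot),Q_-}\le\max\{\mathcal{I}_-^{1/r^-},\mathcal{I}_-^{1/r^+}\}$, where $1/r^-=\sup_{Q_-}(q/2)\le q^+/2$ and $1/r^+=\inf_{Q_-}(q/2)\ge q^-/2$; hence this factor is controlled by $\mathcal{I}_-^{q^-/2}+\mathcal{I}_-^{q^+/2}$, whence $\int_{Q_-}|\nabla w|^{q}\,dz\le C\bigl(\mathcal{I}_-^{q^-/2}+\mathcal{I}_-^{q^+/2}\bigr)$. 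Adding the two contributions and using that each integrand is nonnegative by monotonicity, so that $0\le\mathcal{I}_\pm\le\mathcal{I}$, produces \eqref{eq:I-est}.

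The main obstacle is the treatment of $Q_-$: one must first convert the degenerate monotonicity bound into a genuine power of $|\nabla w|$, and then pass through the variable-exponent Hölder inequality while tracking how \eqref{eq:mod} turns the single quantity $\mathcal{I}_-$ into the two extreme powers $\mathcal{I}_-^{q^\pm/2}$. Keeping the constants uniform relies on $q$ being bounded and bounded away from $1$, and the whole scheme depends on the $L^{q(\cdot)}(Q)$-integrability of $|\nabla u|$ and $|\nabla v|$ supplied by Lemma \ref{pro:test} and \eqref{eq:energy-weak}.
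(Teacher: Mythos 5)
Your argument is correct and is essentially the paper's own proof: the same splitting into $Q_+$ and $Q_-$, the same direct use of monotonicity on $Q_+$, and on $Q_-$ the same scheme of extracting $|\nabla w|^{q}$ as $\mathcal{S}_q^{q/2}$ times a weight, applying the variable-exponent H\"older inequality with the conjugate pair $2/q$, $2/(2-q)$, and converting modulars to norms via \eqref{eq:mod} to produce the powers $\mathcal{I}_-^{q^\pm/2}$. The only cosmetic difference is that you use the monotonicity bound with weight $(|\xi|+|\eta|)^{q-2}$ where the paper uses $(1+|\xi|^2+|\eta|^2)^{(q-2)/2}$; both are standard and lead to the same estimate.
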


\begin{proof} We will make use of the well-known inequality: for all $\xi,\zeta\in \mathbb{R}^n$

\begin{equation}
\label{eq:mon-1}
(|\xi|^{q-2}\xi-|\zeta|^{q-2}\zeta)\cdot (\xi-\zeta)\geq C\begin{cases}
|\xi-\zeta|^{q}, & q\geq 2,
\\
(1+|\xi|^2+|\zeta|^2)^{\frac{q-2}{2}}|\xi-\zeta|^2, & q\in (1,2).
\end{cases}
\end{equation}

By H\"older's inequality \eqref{eq:Holder} and relation \eqref{eq:mod} between the norm and the modular

\[
\begin{split}
& \int_{Q_-} |\nabla u-\nabla v|^{q(z)}\,dz
=\int_{Q_-}(1+|\nabla u|^2+|\nabla v|^2)^{q\frac{2-q}{4}}\left((1+|\nabla u|^2+|\nabla v|^2)^{\frac{q-2}{2}}|\nabla (u-v)|^2\right)^{\frac{q}{2}}\,dz
\\
& \leq 2 \max \left\{
\left(\int_{Q_-}(1+|\nabla u|^2+|\nabla v|^2)^{\frac{q-2}{2}}|\nabla (u-v)|^2\,dz\right)^{\frac{q^+}{2}}, \left( \int_{Q_-}\ldots\right)^{\frac{q^-}{2}}\right\}
\\
& \quad \times \max\left\{\left(\int_{Q_-}(1+|\nabla u|^2+|\nabla v|^2)^{\frac{q}{2}}\,dz\right)^{1-\frac{q^-}{2}},\,\left(\int_{Q_-}\ldots \right)^{1-\frac{q^+}{2}}\right\}
\equiv \mathcal{J}_-.
\end{split}
\]
By \eqref{eq:mon-1}

\[
\begin{split}
\mathcal{J}_- &
\leq C \max \left\{\mathcal{I}_-^{\frac{q^-}{2}}, \mathcal{I}_-^{\frac{q^+}{2}}\right\}
\max\left\{\left(1+\int_{Q}|\nabla u|^q\,dz+\int_{Q}|\nabla v|^q\,dz\right)^{1-\frac{q^-}{2}},\,(1+\ldots)^{1-\frac{q^+}{2}}\right\}
\\
& \leq C \max \left\{\mathcal{I}_-^{\frac{q^-}{2}}, \mathcal{I}_-^{\frac{q^+}{2}}\right\}\left(1+\int_{Q}|\nabla u|^q\,dz+\int_{Q}|\nabla v|^q\,dz\right)^{1-\frac{q^-}{2}}.
\end{split}
\]
Due to estimates \eqref{eq:energy-weak}, \eqref{eq:energy-strong} and property \eqref{eq:higher-integr} of higher integrability of the gradient for strong solutions, the second factor is estimated by a constant depending only on the data. It follows that

\[
\int_{Q_-}|\nabla u-\nabla v|^{q(z)}\,dz\leq \mathcal{J}_-\leq C\max \left\{\mathcal{I}_-^{\frac{q^-}{2}}, \mathcal{I}_-^{\frac{q^+}{2}}\right\}.
\]
The estimate

\[
\int_{Q^+}|\nabla u-\nabla v|^{q(z)}\,dz\leq C\mathcal{I}_+
\]
is an immediate consequence of \eqref{eq:mon-1}.
Gathering these inequalities we find that

\[
\begin{split}
\int_Q |\nabla (u-v)|^{q(z)}\,dz & \leq C \left(\mathcal{I}_++\mathcal{J}_-\right)
\leq C\left(\mathcal{I}_++\mathcal{I}_-^{\frac{q^-}{2}} +\mathcal{I}_-^{\frac{q^+}{2}}\right)\leq C\left(\mathcal{I}+\mathcal{I}^{\frac{q^-}{2}} +\mathcal{I}^{\frac{q^+}{2}}\right)
\end{split}
\]
with a constant $C=C\left(p^{\pm},q^{\pm},\|\nabla u\|_{q(\cdot),Q}, \|\nabla v\|_{q(\cdot),Q}\right)$. The estimate on $\|\nabla v\|_{q(\cdot),Q}$ follows from \eqref{eq:energy-weak}, $\|\nabla u\|_{q(\cdot),Q}$ is estimated in \eqref{eq:energy-weak}, \eqref{eq:energy-strong} and \eqref{eq:higher-integr}.
\end{proof}

Equality \eqref{eq:ident-1-prim} yields the inequality

\begin{equation}
\label{eq:start-1}
\begin{split}
\dfrac{1}{2} & \|u-v\|^2_{2,\Omega}(t) +\mathcal{I} \leq \dfrac{1}{2}\|u_0-v_0\|_{2,\Omega}^2 + \|f_1-f_2\|_{2,Q} \|u-v\|_{2,Q}
\\
& \qquad +\int_{Q}\left||\nabla u|^{q-2}\nabla u-|\nabla u|^{p-2}\nabla u\right| |\nabla (u-v)|\,dz
\\
& \equiv j_1+j_2+j_3.
\end{split}
\end{equation}

\begin{lemma}
\label{le:est-1b} Under the conditions of Theorem \ref{th:stab-1}

\begin{equation}
\label{eq:j-3}
|j_3|\leq C\sup_{Q}|p(z)-q(z)|^{q'(z)}
\end{equation}
with a constant $C$ depending on $\|\nabla u\|_{q(\cdot),Q}$, $\|\nabla u\|_{p(\cdot),Q}$, $\|\nabla v\|_{q(\cdot),Q}$.
\end{lemma}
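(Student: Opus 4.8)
The plan is to reduce $j_3$ to a modular estimate for the scalar quantity $w:=\bigl||\nabla u|^{q-2}\nabla u-|\nabla u|^{p-2}\nabla u\bigr|$. First I would exploit that both terms act on the same vector $\nabla u$, so the integrand factorizes: $w=\bigl|\,|\nabla u|^{q-1}-|\nabla u|^{p-1}\bigr|$. Writing $a=|\nabla u|$ and applying the mean value theorem to $s\mapsto a^{s}$ on the interval with endpoints $p-1$ and $q-1$ gives the pointwise bound $w\le |p-q|\,a^{\theta}|\ln a|$, where $\theta$ lies between $\min\{p,q\}-1$ and $\max\{p,q\}-1$. This is the estimate that produces the crucial extra power $q'$ in the end.

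Next I would apply the generalized H\"older inequality \eqref{eq:Holder} with the conjugate exponents $q'(z),q(z)$ to split $j_3\le 2\,\|w\|_{q'(\cdot),Q}\,\|\nabla(u-v)\|_{q(\cdot),Q}$. The second factor is bounded by a constant depending only on the data, since $\|\nabla u\|_{q(\cdot),Q}$ and $\|\nabla v\|_{q(\cdot),Q}$ are controlled by the energy estimates \eqref{eq:energy-weak}, \eqref{eq:energy-strong} and the higher integrability \eqref{eq:higher-integr} (cf.\ Lemma \ref{pro:test}). Everything then rests on estimating the modular $\rho_{q'(\cdot)}(w)=\int_Q w^{q'(z)}\,dz$ and invoking the modular--norm relation \eqref{eq:mod}.

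For the modular I would raise the pointwise bound to the power $q'(z)$, pull the factor $|p-q|^{q'(z)}$ out under its supremum as $\sup_Q|p-q|^{q'}$ (which is exactly the shape of the right-hand side of \eqref{eq:j-3}), and be left with $\int_Q\bigl(a^{\theta}|\ln a|\bigr)^{q'(z)}\,dz$. The logarithm is absorbed by the elementary inequality $|\ln a|^{q'}\le C_\varepsilon\,(a^{\varepsilon q'}+a^{-\varepsilon q'})$: splitting into $\{a\le1\}$ and $\{a>1\}$ and using $\theta>0$ bounds the contribution on $\{a\le 1\}$ by a constant times $|Q|$, while on $\{a>1\}$ one is left with powers $a^{(q-1+\varepsilon)q'}=a^{\,q+\varepsilon q'}$ and $a^{(p-1+\varepsilon)q'}$. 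Here both inequalities in the proximity condition \eqref{eq:proximity} enter: they guarantee $q\le p+r^\ast-\lambda$ and $(p-1)q'\le p+r^\ast-\lambda$ (the very inequalities used in Lemma \ref{pro:test}), so that for $\varepsilon$ small these exponents stay below $p(z)+r^\ast$ and the integrals are finite by \eqref{eq:higher-integr}. This yields $\rho_{q'(\cdot)}(w)\le C\sup_Q|p-q|^{q'}$ and hence, through \eqref{eq:mod} and the preceding H\"older split, the estimate \eqref{eq:j-3}.

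The main obstacle is the third step: controlling the logarithmic factor uniformly together with the variable exponent $q'(z)$, and in particular verifying that after the log-absorption the resulting powers of $|\nabla u|$ never exceed the higher-integrability threshold $p(z)+r^\ast$. This is precisely where the quantitative proximity of $p$ and $q$ in \eqref{eq:proximity} is indispensable, and where I would take care to fix $\varepsilon$ depending only on $\lambda$, $p^\pm$, $q^\pm$ so that every constant remains expressible through the data and the energy norms of $u$ and $v$.
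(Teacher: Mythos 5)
Your proposal follows essentially the same route as the paper's proof: the generalized H\"older split $j_3\le 2\|w\|_{q'(\cdot),Q}\|\nabla(u-v)\|_{q(\cdot),Q}$, the Lagrange/mean-value representation of $|\nabla u|^{q-1}-|\nabla u|^{p-1}$ producing the factor $|p-q|\,|\nabla u|^{\theta}|\ln|\nabla u||$, absorption of the logarithm into a small power $|\nabla u|^{\pm\varepsilon}$ on the sets $\{|\nabla u|\ge 1\}$ and $\{|\nabla u|<1\}$, and the use of both inequalities in \eqref{eq:proximity} to keep the resulting exponents below the higher-integrability threshold of \eqref{eq:higher-integr}. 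The argument is correct and matches the paper's proof of Lemma \ref{le:est-1b} in all essential steps.
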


\begin{proof}
By the generalized H\"older inequality

\[
\begin{split}
j_3 & = \int_{Q}\left||\nabla u|^{q-2}\nabla u-|\nabla u|^{p-2}\nabla u\right|| \nabla (u-v)|\,dz
\\
& \leq 2\left\|(|\nabla u|^{q-2}-|\nabla u|^{p-2})\nabla u\right\|_{q'(\cdot),Q}\|\nabla (u-v)\|_{q(\cdot),Q}.
\end{split}
\]
By the finite increments formula of Lagrange

\[
\begin{split}
|\nabla u|^{q-2}-|\nabla u|^{p-2} & =|\nabla u|^{-2}\int_0^1\dfrac{d}{d\theta}\left(|\nabla u|^{\theta q+(1-\theta)p}\right)\,d\theta
\\
& =|\nabla u|^{-2}\int_0^1 |\nabla u|^{p+\theta (q-p)}\,d\theta \ln |\nabla u|(q-p),
\end{split}
\]
whence

\begin{equation}
\label{eq:M-0}
\begin{split}
|(|\nabla u|^{q-2} & -|\nabla u|^{p-2})\nabla u |^{q'}
\\
&
\leq \left(\int_0^1\left(|\nabla u|^{p+\theta(q-p)-1}|\ln |\nabla u|| \right)\,d\theta\right)^{q'} |q-p|^{q'}:=\mathcal{M}.
\end{split}
\end{equation}
For the sake of definiteness, let us assume first that $q(z)\geq p(z)$. There are two possibilities: $|\nabla u|\geq 1$ and $|\nabla u|<1$.

\medskip

(a) $|\nabla u|\geq 1$. For every constant $\alpha>0$

\begin{equation}
\label{eq:M-1}
\begin{split}
\mathcal{M} & \leq \left(\int_0^1\left(|\nabla u|^{q-1+\alpha}\right)\left(|\nabla u|^{-\alpha}\ln |\nabla u| \right)\,d\theta \right)^{q'}|q-p|^{q'}
\leq C_{\alpha}|\nabla u|^{(q-1+\alpha)q'}|q-p|^{q'}
\end{split}
\end{equation}
with the constant

\[
C_\alpha=\left(\sup_{(1,\infty)}s^{-\alpha}\ln s\right)^{q'}=(\alpha {\rm e})^{-q'}\leq (\alpha {\rm e})^{-q^+/(q^+-1)}.
\]
Assumption \eqref{eq:proximity} allows one to choose $\alpha$ in such a way that
\[
(q-1+\alpha)q'\leq p+r^\ast-\frac{\lambda}{2}.
\]
It suffices to take for $\alpha$ any positive number satisfying the inequality

\[
\alpha<\dfrac{\lambda}{2}\dfrac{q^--1}{p^++r^\ast-\lambda}.
\]
Using Young's inequality and \eqref{eq:proximity}, we find that at the points where $|\nabla u|\geq 1$

\[
\mathcal{M}\leq C_\alpha\left(1+|\nabla u|^{p+r^\ast-\frac{\lambda}{2}}\right)|p-q|^{q'}
\]
with a constant $C$ independent of $u$.

\medskip

(b) $|\nabla u|<1$. For every constant $\beta>0$

\begin{equation}
\label{eq:M-2}
\begin{split}
\mathcal{M}
&
\leq |\nabla u|^{(p-1-\beta)q'}\left(|\nabla u|^{\beta}|\ln |\nabla u||\right)^{q'}|q-p|^{q'}
\leq C_\beta|\nabla u|^{(p-1-\beta)q'}|q-p|^{q'}
\end{split}
\end{equation}
with the constant

\[
C_\beta = \left(\sup_{(0,1)}\left(s^{\beta}|\ln s|\right)\right)^{q'}=(\beta {\rm e})^{-q'}\leq (\beta {\rm e})^{-q^+/(q^+-1)}.
\]
Take $0<\beta<p^--1$. By Young's inequality

\[
\mathcal{M}\leq C_\beta \left(1+|\nabla u|^{q}\right)|q-p|^{q'}
\]
with an independent of $u$ constant $C$.

\medskip

If $p(z)\geq q(z)$, we arrive at the same conclusion replacing $p$ and $q$ in the integrand of \eqref{eq:M-0}:

\[
|\nabla u|^{p+\theta(q-p)-1}|\ln |\nabla u||\leq C\begin{cases}
|\nabla u|^{p-1+\alpha} & \text{if $|\nabla u|\geq 1$},
\\
|\nabla u|^{q-1-\beta} & \text{if $|\nabla u|<1$}.
\end{cases}
\]
Estimate \eqref{eq:j-3} follows by integration of the estimates on $\mathcal{M}$ over $Q$ and by using \eqref{eq:higher-integr} and \eqref{eq:energy-weak}.
\end{proof}

Plugging \eqref{eq:j-3} into \eqref{eq:start-1} we continue \eqref{eq:start-1} as follows:

\begin{equation}
\label{eq:start-2}
\begin{split}
\dfrac{1}{2}\|u-v\|_{2,\Omega}^{2}(t) +\mathcal{I} & \leq C\left(\|u\|_{2,Q} +\|v\|_{2,Q}\right)\|f_1-f_2\|_{2,Q}
+ \dfrac{1}{2}\|u_0-v_0\|_{2,\Omega}^{2}+C\sup_{Q}|p(z)-q(z)|^{q'(z)}.
\end{split}
\end{equation}
Using the inequalities $\|u\|_{2,Q}\leq \|u\|_{\mathbf{W}_p(Q)}$, $\|v\|_{2,Q}\leq \|
v\|_{\mathbf{W}_q(Q)}$, we obtain the estimate

\begin{equation}
\label{eq:start-3}
\begin{split}
\|u-v\|_{2,\Omega}^{2}(t) +2\mathcal{I} & \leq \|u_0-v_0\|_{2,\Omega}^{2}
+ C\left(\|f_1-f_2\|_{2,Q}+\sup_{Q}|p-q|^{q'}\right)\leq C\mathcal{R}
\end{split}
\end{equation}
with a constant $C$ depending on $T$, $n$, $\partial \Omega$, $q^\pm$, $L$, $\|u\|_{\mathbf{W}_p}$, $\|v\|_{\mathbf{W}_{q}}$, where the last two quantities are estimated through the data of problems \eqref{eq:main} for $u$ and $v$.

The assertion of Theorem \ref{th:stab-1} immediately follows now from \eqref{eq:start-3} and \eqref{eq:I-est}.
\hfill $\Box$

\subsection{Compactness of families of strong solutions}
Let $\{p_k(z)\}$ be a sequence of functions satisfying the conditions

\begin{equation}
\label{eq:cond-comp}
\begin{split}
& \quad \dfrac{2n}{n+2}<p^-\leq p_k(z)\leq p^+<\infty\quad \text{in $Q$},
\\
& \quad \|\nabla p_k\|_{\infty,Q}+\|(p_{k})_t\|_{\infty,Q}=L<\infty
\end{split}
\end{equation}
with some positive constants $p^\pm$ and $L$. Let $\{u_m\}$ be the sequence of strong solutions of the problems

\begin{equation}
\label{eq:main-k}
\begin{split}
& \frac{\partial u_{k}}{\partial t}-\operatorname{div}\left(|\nabla u_k|^{p_k(z)-2}\nabla u_k\right)=f\quad \text{in $Q$},
\\
& \text{$u_k=0$ on $\partial\Omega\times (0,T)$},
\\
& \text{$u_k(x,0)=u_0(x)$ in $\Omega$}.
\end{split}
\end{equation}

\begin{theorem}
\label{th:comp-p-Laplace}
Let $\partial \Omega\in C^2$, $f\in L^2(0,T;W^{1,2}_0(\Omega))$, and $u_0\in W^{1,s}_0(\Omega)$ with $s=\max\{2,p^+\}$.

\begin{itemize}
\item[{\rm (i)}] If $p_k(z)$ satisfy conditions \eqref{eq:cond-comp}, then there exists $p^\ast(\cdot)\in C^{\alpha}(\overline{Q})$, $\alpha\in (0,1)$,  $p^-\leq p^\ast(z)\leq p^+$, such that the sequence $\{u_k\}$ is relatively compact in $\mathbf{W}_{p^\ast}(Q)$: there is a subsequence $\{p_{k_m}(z)\}$ such that

\begin{equation}
\label{eq:compact}
\|u_{k_m}-u_{k_s}\|_{2,\Omega}^2(t)+\int_Q|\nabla (u_{k_m}-u_{k_s})|^{p^\ast(z)}\,dz\to 0\quad \text{as $k_m,k_s\to \infty$.}
\end{equation}

\item[{\rm (ii)}] The sequence $\{u_{k_m}\}$ converges in $\mathbf{W}_{p^\ast}(Q)$ to the weak solution of problem \eqref{eq:main} with $p(z)\equiv p^\ast(z)$.
    \end{itemize}
\end{theorem}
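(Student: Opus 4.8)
The plan is to combine the compactness of the exponent sequence with the quantitative stability estimate of Theorem \ref{th:stab-1}, applied to pairs of strong solutions whose exponents differ only slightly.

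First I would handle the exponents. Conditions \eqref{eq:cond-comp} make the family $\{p_k\}$ uniformly bounded and equi-Lipschitz on $\overline{Q}$, so by the Arzel\`a--Ascoli theorem I extract a subsequence $\{p_{k_m}\}$ converging uniformly on $\overline{Q}$ to a limit $p^\ast$, which inherits the Lipschitz bound; thus $p^\ast\in C^{0,1}(\overline{Q})\subset C^{\alpha}(\overline{Q})$ for every $\alpha\in(0,1)$, and $p^-\le p^\ast\le p^+$. Since $u_0\in W^{1,s}_0(\Omega)$ with $s=\max\{2,p^+\}$ dominates each $\max\{2,p_k(x,0)\}$ and $f\in L^2(0,T;W^{1,2}_0(\Omega))$, every data set $\{p_k,f,u_0\}$ satisfies the hypotheses of Theorem \ref{th:higher-integr}. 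Hence each $u_k$ is a strong solution and the bounds \eqref{eq:energy-strong}, \eqref{eq:higher-integr} hold with constants depending only on the common data, in particular uniformly in $k$.

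Next I apply Theorem \ref{th:stab-1} to each pair $(u_{k_m},u_{k_s})$, taking $p=p_{k_m}$, $q=p_{k_s}$, and the common $f_1=f_2=f$, $u_0=v_0$, so that $\mathcal{R}=\sup_Q|p_{k_m}-p_{k_s}|^{q'}$. Because $r^\ast$ depends only on $n$ and $p^-$, I fix once and for all some $\lambda\in(0,r^\ast)$; as $\sup_Q|p_{k_m}-p_{k_s}|\to 0$, both inequalities in the proximity condition \eqref{eq:proximity} hold for all sufficiently large $m,s$ (the lower one using $p_{k_s}-1\ge p^--1>0$). Estimate \eqref{eq:stab-1} then gives
\[
\operatorname{ess}\sup_{(0,T)}\|u_{k_m}-u_{k_s}\|_{2,\Omega}^{2}(t)+\int_{Q}|\nabla (u_{k_m}-u_{k_s})|^{p_{k_s}(z)}\,dz\le C\left(\mathcal{R}+\mathcal{R}^{\frac{q^+}{2}}+\mathcal{R}^{\frac{q^-}{2}}\right),
\]
where $C$ is uniform in $m,s$ by the remarks above, and the right-hand side tends to zero as $m,s\to\infty$. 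The main obstacle is that the gradient term here carries the exponent $p_{k_s}$ rather than the target $p^\ast$. To convert it I would use the finite-increments formula as in Lemma \ref{le:est-1b}, writing, with $w=u_{k_m}-u_{k_s}$ and $\xi(z)$ between $p^\ast(z)$ and $p_{k_s}(z)$,
\[
\int_{Q}|\nabla w|^{p^\ast}\,dz-\int_{Q}|\nabla w|^{p_{k_s}}\,dz=\int_{Q}|\nabla w|^{\xi(z)}\ln|\nabla w|\,(p^\ast-p_{k_s})\,dz.
\]
Splitting $Q$ according to whether $|\nabla w|\ge 1$ and absorbing the logarithm into a small power $\alpha>0$, the right-hand side is bounded by $\sup_Q|p^\ast-p_{k_s}|\bigl(|Q|+C\int_{Q}|\nabla w|^{\xi+\alpha}\,dz\bigr)$. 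Since $\xi+\alpha\le p_{k_m}+\delta$ and $\xi+\alpha\le p_{k_s}+\delta$ hold pointwise once $m,s$ are large (all three exponents being uniformly close, with a fixed $\delta\in(\alpha,r^\ast)$), the uniform higher-integrability bound \eqref{eq:higher-integr} keeps $\int_{Q}|\nabla w|^{\xi+\alpha}\,dz$ bounded; as $\sup_Q|p^\ast-p_{k_s}|\to 0$ the correction vanishes, and \eqref{eq:compact} follows. Passing between modular and norm via \eqref{eq:mod}, the subsequence is Cauchy in $\mathbf{W}_{p^\ast}(Q)$, which proves (i).

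For (ii), completeness of $\mathbf{W}_{p^\ast}(Q)$ yields a limit $u^\ast$ with $u_{k_m}\to u^\ast$ in $L^\infty(0,T;L^2(\Omega))$ and $\nabla u_{k_m}\to\nabla u^\ast$ in $L^{p^\ast(\cdot)}(Q)$. I would identify $u^\ast$ as the weak solution of \eqref{eq:main} with $p\equiv p^\ast$ by passing to the limit in the weak identity \eqref{eq:def} for each $u_{k_m}$. Strong gradient convergence gives, along a further subsequence, $\nabla u_{k_m}\to\nabla u^\ast$ a.e.; together with the uniform convergence $p_{k_m}\to p^\ast$ and the uniform bound \eqref{eq:higher-integr} for domination, this forces $|\nabla u_{k_m}|^{p_{k_m}-2}\nabla u_{k_m}\to|\nabla u^\ast|^{p^\ast-2}\nabla u^\ast$ in $L^{(p^\ast)'(\cdot)}(Q)$, so the flux term converges against every fixed test function. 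The remaining terms pass to the limit by the $L^2$ convergence, and the initial datum is preserved by the convergence in $C([0,T];L^2(\Omega))$. By the uniqueness part of Theorem \ref{th:weak-sol}, $u^\ast$ is the unique weak solution of the limit problem, so the convergence holds for the whole extracted subsequence $\{u_{k_m}\}$.
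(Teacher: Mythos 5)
Your proof is correct, and part (i) follows essentially the paper's own route: Arzel\`a--Ascoli applied to $\{p_k\}$, then Theorem \ref{th:stab-1} applied to pairs $(u_{k_m},u_{k_s})$ with identical data $f$, $u_0$, the constants being uniform in $m,s$ thanks to \eqref{eq:cond-comp} and Theorems \ref{th:weak-sol}, \ref{th:higher-integr}. The two treatments diverge only in how the exponent $p_{k_s}$ in the gradient term is replaced by $p^\ast$: the paper symmetrizes (applies Theorem \ref{th:stab-1} again with the roles of $p_{k_m}$ and $p_{k_s}$ exchanged) to obtain the estimate with exponent $\max\{p_{k_m},p_{k_s}\}$ and then simply invokes proximity \eqref{eq:proximity} of this exponent to $p^\ast$ --- a step it leaves terse --- whereas you carry out the conversion explicitly via the finite-increments formula of Lemma \ref{le:est-1b} combined with the uniform higher integrability \eqref{eq:higher-integr}; your version is more detailed on exactly the point the paper glosses over, and makes the symmetrization unnecessary. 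The genuine difference is in (ii). The paper's argument is a one-liner: since $p^\ast\in C^{\alpha}(\overline{Q})\subset C_{{\rm log}}(\overline{Q})$, the limit problem has a unique weak solution $u^\ast$ by Theorem \ref{th:weak-sol}, and Theorem \ref{th:stab-1} applied to the pair $p=p_{k_m}$, $q=p^\ast$ (strong versus weak solution, same $f$, $u_0$) gives $\|u_{k_m}-u^\ast\|_{\mathbf{W}_{p^\ast}}\to 0$ directly, because $\mathcal{R}=\sup_Q|p_{k_m}-p^\ast|^{(p^\ast)'}\to 0$. You instead construct the limit $u^\ast$ abstractly by completeness and identify it by passing to the limit in the weak formulation; this works, but it obliges you to justify the flux convergence --- your appeal to ``domination'' is really Vitali's theorem: a.e.\ convergence of a further subsequence plus equi-integrability of $|\nabla u_{k_m}|^{(p_{k_m}-1)(p^\ast)'}$ supplied by the uniform bound \eqref{eq:higher-integr} --- and to handle the term $\int_Q (u_{k_m})_t\,\phi\,dz$, which needs the uniform $L^2(Q)$ bound on $(u_{k_m})_t$ from \eqref{eq:energy-strong} (or integration by parts against smooth test functions); you only sketch these points. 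What each approach buys: the paper's shortcut is shorter, reuses the quantitative stability estimate, and yields a convergence rate through $\mathcal{R}$; your identification scheme is the standard compactness argument and avoids invoking Theorem \ref{th:stab-1} for the pair $(p_{k_m},p^\ast)$, though since \eqref{eq:proximity} holds anyway for large $m$ by uniform convergence, no extra generality is gained here.
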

\begin{proof}
\medskip

{\rm (i)} Let us fix some $\alpha\in (0,1)$. By virtue of \eqref{eq:cond-comp} the set $\{p_k\}$ is uniformly bounded and Lipschitz-equicontinuous. By the Ascoli-Arzela theorem $\{p_{k}\}$ is compact in $C^{\alpha}(\overline{\Omega})$.  Let $\{p_{k_m}\}$ be a Cauchy sequence in $C^{\alpha}(\overline{Q})$, $p_{k_m}(\cdot)\to p^\ast(\cdot)$ in $C^\alpha(\overline{Q})$. Take $p_{k_m}$, $p_{k_s}$ with $m$ and $s$ so large that $\sup_Q|p_{k_m}-p_{k_s}|<1$ and condition \eqref{eq:proximity} is fulfilled. By Theorem \ref{th:stab-1} with $p=p_{k_m}$, $q=p_{k_s}$

\begin{equation}
\label{eq:comp-1}
\begin{split}
\operatorname{ess}\sup_{(0,T)}\|u_{k_m}-u_{k_s}\|_{2,\Omega}^{2}(t) & + \int_{Q}|\nabla (u_{k_m}-u_{k_s})|^{p_{k_s}}\,dz
\leq C\left(\mathcal{R}+\mathcal{R}^{\frac{p^+}{2}}+\mathcal{R}^{\frac{p^-}{2}}\right)
\end{split}
\end{equation}
where

\[
\mathcal{R}=\sup_{Q}|p_{k_m}-p_{k_s}|^{p_{k_s}'}\leq \sup_{Q}|p_{k_m}-p_{k_s}|^{(p^-)'}.
\]
The constant $C$ in \eqref{eq:comp-1} depends on the constants $p^\pm$, $L$, $n$, $T$, the properties of $\partial \Omega$, and the norms $\|u_{k_m}\|_{\mathbf{W}_{p_{k_m}}(Q)}$, $\|u_{k_m}\|_{\mathbf{W}_{p_{k_s}}(Q)}$. Due to Theorems \ref{th:weak-sol}, \ref{th:higher-integr}, the last two quantities are estimated by a constant that depends on the data of problem \eqref{eq:main-k} but does not depend on $k_m$, $k_s$. Replacing $p_{k_m}$ and $p_{k_s}$ and repeating the same arguments we also have

\begin{equation}
\label{eq:comp-2}
\begin{split}
\operatorname{ess}\sup_{(0,T)}\|u_{k_m}-u_{k_s}\|_{2,\Omega}^{2}(t)
& + \int_{Q}|\nabla (u_{k_m}-u_{k_s})|^{p_{k_m}(z)}\,dz
\leq C\left(\mathcal{R}+\mathcal{R}^{\frac{p^+}{2}}+\mathcal{R}^{\frac{p^-}{2}}\right).
\end{split}
\end{equation}
Gathering \eqref{eq:comp-1}, \eqref{eq:comp-2} we obtain:

\[
\begin{split}
\operatorname{ess}\sup_{(0,T)}\|u_{k_m}-u_{k_s}\|_{2,\Omega}^{2}(t) & + \int_{Q}|\nabla (u_{k_m}-u_{k_s})|^{\max\{p_{k_m},p_{k_s}\}}\,dz
\leq C\left(\mathcal{R}+\mathcal{R}^{\frac{p^+}{2}}+\mathcal{R}^{\frac{p^-}{2}}\right)
\end{split}
\]
for all suffiently large $k_m,k_s$. Since $p_{k_s}\to p^\ast$ in $C^\alpha(\overline{Q})$, condition \eqref{eq:proximity} is fulfilled for $q=p^\ast$ and $p=\max\{p_{k_s},p_{k_m}\}$, whence \eqref{eq:compact}.

{\rm (ii)} Let us consider problem \eqref{eq:main} with the exponent $p^\ast(\cdot)\in C^{\alpha}(\overline{Q})$, $p^\ast=\lim p_{k_m}$, and the data $f$, $u_0$. By Theorem \ref{th:weak-sol} this problem has the unique weak solution $u^\ast\in \mathbf{W}_{p^\ast}(Q)$. By Theorem \ref{th:stab-1}, the uniform convergence $p_{k_m}\to p^\ast$ and inequality \eqref{eq:stab-1} yield

\[
\|u_{k_m}-u^\ast\|_{\mathbf{W}_{p^\ast}}\to 0\quad \text{as $k_m\to \infty$.}
\]
\end{proof}

\begin{corollary}
\label{cor:data-k}
Let $\{u_k\}$ be the sequence of solutions of problem \eqref{eq:main-k} with $f=f_k$ and $u_0=u_{0k}$. Assume that $\Omega$ and $p_k$ satisfy the conditions of Theorem \ref{th:comp-p-Laplace}, $f_k\in L^{2}(0,T;W^{1,2}_0(\Omega))$, $u_{0k}\in W^{1,s}_0(\Omega)$ with $s=\max\{2,p^+\}$, and

\[
\text{$f_k\to f$ in $L^2(Q)$},\qquad \text{$u_{0k}\to u_0$ in $L^2(\Omega)$}.
\]
Then there exists $p^\ast\in C^{\alpha}(\overline{Q})$ such that the  sequence $\{u_k\}$ is precompact in $\mathbf{W}_{p^\ast(\cdot)}(Q)$ and converges in $\mathbf{W}_{p^\ast(\cdot)}(Q)$ to the weak solution of problem \eqref{eq:main-k} with the data $\{p^\ast(z), f(z), u_0(z)\}$.
\end{corollary}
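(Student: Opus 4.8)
The plan is to repeat, almost verbatim, the argument of Theorem \ref{th:comp-p-Laplace}; the only new ingredient is the treatment of the perturbations of $f$ and $u_0$. First I would recall that by conditions \eqref{eq:cond-comp} the family $\{p_k\}$ is uniformly bounded and Lipschitz-equicontinuous, so the Ascoli-Arzela theorem furnishes a subsequence $\{p_{k_m}\}$ with $p_{k_m}\to p^\ast$ in $C^{\alpha}(\overline{Q})$ for some $p^\ast\in C^{\alpha}(\overline{Q})\subset C_{{\rm log}}(\overline{Q})$, $p^-\leq p^\ast\leq p^+$. Since $f_k\in L^{2}(0,T;W^{1,2}_0(\Omega))$ and $u_{0k}\in W^{1,s}_0(\Omega)\subseteq W^{1,s_k(\cdot)}_0(\Omega)$ with $s_k(x)=\max\{2,p_k(x,0)\}$ and $s=\max\{2,p^+\}\geq s_k$, each $u_k$ is the strong solution of the corresponding problem \eqref{eq:main-k} by Theorem \ref{th:higher-integr}.

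Next I would introduce the candidate limit. By Theorem \ref{th:weak-sol} the problem \eqref{eq:main-k} with the data $\{p^\ast(z),f(z),u_0(x)\}$ has a unique weak solution $u^\ast\in\mathbf{W}_{p^\ast}(Q)$. For all $m$ large enough, $\sup_Q|p_{k_m}-p^\ast|<1$ and condition \eqref{eq:proximity} holds with $p=p_{k_m}$ and $q=p^\ast$, so Theorem \ref{th:stab-1} applies to the strong solution $u_{k_m}$ and the weak solution $u^\ast$, and its right-hand side is controlled by
\begin{equation*}
\mathcal{R}_m=\|u_{0,k_m}-u_0\|_{2,\Omega}^2+\|f_{k_m}-f\|_{2,Q}^2+\sup_Q|p_{k_m}-p^\ast|^{(p^\ast)'}.
\end{equation*}
Here the first two terms tend to $0$ because $u_{0k}\to u_0$ in $L^{2}(\Omega)$ and $f_k\to f$ in $L^2(Q)$, while the third tends to $0$ by the uniform convergence $p_{k_m}\to p^\ast$ together with the bound $(p^\ast)'\leq (p^-)'<\infty$. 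Hence $\mathcal{R}_m\to 0$, and \eqref{eq:stab-1} yields $u_{k_m}\to u^\ast$ in $\mathbf{W}_{p^\ast}(Q)$, which is exactly the asserted convergence. Precompactness of $\{u_k\}$ follows at once: an arbitrary subsequence possesses, again by Ascoli-Arzela, a further subsequence whose exponents converge in $C^{\alpha}(\overline{Q})$, and the same estimate forces the corresponding solutions to converge in the associated space.

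The one point that demands care -- and which I expect to be the main obstacle -- is the uniformity in $m$ of the constant $C$ in \eqref{eq:stab-1}. Through Theorem \ref{th:higher-integr} and Lemmas \ref{le:est-1}, \ref{le:est-1b} this constant depends on the strong-solution data via $\|f_{k_m}\|_{L^2(0,T;W^{1,2}_0(\Omega))}$ and $\|u_{0,k_m}\|_{W^{1,s}_0(\Omega)}$, whereas the hypotheses bound only $\|f_{k_m}\|_{2,Q}$ and $\|u_{0,k_m}\|_{2,\Omega}$. I would therefore invoke (it is implicit in the statement) the uniform bound
\begin{equation*}
\sup_k\left(\|f_k\|_{L^2(0,T;W^{1,2}_0(\Omega))}+\|u_{0k}\|_{W^{1,s}_0(\Omega)}\right)<\infty .
\end{equation*}
Under this bound the energy estimate \eqref{eq:energy-strong} and the higher-integrability estimate \eqref{eq:higher-integr} hold with a constant independent of $k$, so the gradient norms of $u_{k_m}$ and $u^\ast$ entering $C$ are uniformly bounded and $C$ is independent of $m$. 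With the constant stabilised, the passage to the limit is legitimate, and the rest is a verbatim repetition of the proof of Theorem \ref{th:comp-p-Laplace}.
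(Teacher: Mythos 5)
Your argument is correct and is essentially the proof the paper intends: the corollary is stated without a separate proof as a direct consequence of Theorem \ref{th:stab-1} (now using the full $\mathcal{R}$, which contains $\|u_{0k}-u_0\|^2_{2,\Omega}$ and $\|f_k-f\|^2_{2,Q}$) combined with the Ascoli-Arzela extraction of a convergent subsequence $p_{k_m}\to p^\ast$ exactly as in the proof of Theorem \ref{th:comp-p-Laplace}. Your observation that the constant in \eqref{eq:stab-1} depends on $\|f_{k_m}\|_{L^2(0,T;W^{1,2}_0(\Omega))}$ and $\|u_{0,k_m}\|_{W^{1,s}_0(\Omega)}$, so that a uniform bound on these norms must be assumed (it is not literally implied by the stated convergences in $L^2$) for $C$ to be independent of $m$, is a legitimate point that the paper leaves implicit.
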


\section{Evolution $p(z)$-Laplacian with a nonlinear source}
\label{sec:pert}

Let us consider the problem with the nonlinear source

\begin{equation}
\label{eq:pert-1}
\begin{split}
& u_t-\operatorname{div}(|\nabla u|^{p(z)-2}\nabla u)=f(z,u) \quad \text{in $Q$},
\\
& \text{$u=0$ on $\partial \Omega\times (0,T)$},
\\
& \text{$u(x,0)=u_0(x)$ in $\Omega$}.
\end{split}
\end{equation}
The source $f(z,u)$ is a Carathéodory function (measurable in $z\in Q$ for a.e. $u\in \mathbb{R}$ and continuous in $u$ for a.e. $z\in Q$).

\begin{definition}
\label{def:pert}
A function $u:Q\mapsto \mathbb{R}$ is called \textbf{weak solution} of problem \eqref{eq:pert-1} if

\begin{itemize}
\item[a)] $u\in C^{0}([0,T];L^2(\Omega))\cap \mathbf{W}_p(Q)$,  $u_t\in (\mathbf{W}_p(Q))'$;

    \item[b)] for every $\phi\in \mathbf{W}_p(Q)$,

    \begin{equation}
    \label{eq:def-pert}
\int_Q\left(u_t\phi+|\nabla u|^{p-2}\nabla u\cdot \nabla \phi\right)\,dz=\int_Qf(z,u)\phi\,dz;
\end{equation}
\item[c)] for every $\phi\in L^2(\Omega)$ $(u(x,t)-u_0,\phi(x))_{2,\Omega}\to 0$ as $t\to 0^+$;

    \item[d)] a weak solution is called \textbf{strong solution} if $u_t\in L^{2}(Q)$ and $|\nabla u|^{p(z)}\in L^{\infty}(0,T;L^1(\Omega))$.
\end{itemize}
\end{definition}

\subsection{Existence of strong solutions}
Let $f$ satisfy the following growth conditions: there exist a constant $c_0\geq 0$ and a function $f_0(z)$ such that

\begin{equation}
\label{eq:source}
|f(z,u)|\leq c_0|u|^{\lambda-1}+f_0(z)\quad \text{in $Q\times \mathbb{R}$}
\end{equation}
with

\begin{equation}
\label{eq:pert-2}
\lambda=\max\{2,p^--\delta\}\geq 2, \quad\delta>0, \qquad f_0\in L^{\lambda'}(Q).
\end{equation}

\begin{theorem}[Th.4.1,~\cite{ant-shm-book-2015}]
\label{th:pert-1}
Let $\partial\Omega$ and $p(z)$ satisfy the conditions of Theorem \ref{th:weak-sol}. If the source $f(z,s)$ satisfies conditions \eqref{eq:source}, \eqref{eq:pert-2}, then for every $u_0\in L^{2}(\Omega)$ problem \eqref{eq:pert-1} has at least one weak solution in the sense of Definition \ref{def:pert}. The weak solution satisfies the estimate

\begin{equation}
\label{eq:pert-4}
\operatorname{ess}\sup_{(0,T)}\|u(t)\|^{2}_{2,\Omega}+\int_{Q}|\nabla u|^{p(z)}\,dz\leq C\left(\|u_0\|^{2}_{2,\Omega}+\|f_0\|_{\lambda',Q}+1\right)
\end{equation}
with an independent of $u$ constant $C$.
\end{theorem}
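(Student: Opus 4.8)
The plan is to construct a weak solution by the Faedo--Galerkin method and then pass to the limit, combining the compactness coming from the uniform energy bound \eqref{eq:pert-4} with the monotonicity of the $p(z)$-Laplacian. Fix a system $\{w_j\}\subset C_0^\infty(\Omega)$ whose linear span is dense in $L^2(\Omega)$ and in $W_0^{1,p(\cdot,t)}(\Omega)$ for a.e.\ $t$ (possible since $p\in C_{\log}(\overline Q)$), and seek approximations $u_m(x,t)=\sum_{j=1}^m c_{jm}(t)w_j(x)$ satisfying, for $j=1,\dots,m$,
\[
(\partial_t u_m,w_j)_{2,\Omega}+\int_\Omega |\nabla u_m|^{p(z)-2}\nabla u_m\cdot\nabla w_j\,dx=\int_\Omega f(z,u_m)w_j\,dx,
\]
with $u_m(0)$ the $L^2$-projection of $u_0$ onto $\operatorname{span}\{w_1,\dots,w_m\}$. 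Since the coefficients of this system are continuous in $(t,c)$ — the source is Carath\'eodory with growth \eqref{eq:source}, the diffusion term is continuous in $\nabla u_m$ — Carath\'eodory's existence theorem yields a local-in-time solution, which the a priori bound below extends to $[0,T]$.

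For the a priori estimates I would test with $u_m$ (multiply the $j$-th equation by $c_{jm}$ and sum); using the integration-by-parts identity \eqref{eq:int-by-parts} in $t$,
\[
\tfrac12\|u_m(t)\|_{2,\Omega}^2+\int_0^t\!\!\int_\Omega |\nabla u_m|^{p(z)}\,dx\,d\tau=\tfrac12\|u_m(0)\|_{2,\Omega}^2+\int_0^t\!\!\int_\Omega f(z,u_m)u_m\,dx\,d\tau.
\]
The source is controlled by \eqref{eq:source}: $\int f(z,u_m)u_m \le c_0\|u_m\|_{\lambda,\Omega}^{\lambda}+\|f_0\|_{\lambda',\Omega}\|u_m\|_{\lambda,\Omega}$. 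When $\lambda=2$ this closes by Young's inequality and Gr\"onwall; when $\lambda=p^--\delta>2$ the exponent is strictly subcritical ($\lambda<p^-$), so by the parabolic embedding of $L^\infty(0,T;L^2(\Omega))\cap L^{p^-}(0,T;W_0^{1,p^-}(\Omega))$ and Young's inequality the term $\int_Q|u_m|^\lambda$ is absorbed, up to a constant depending on the data, into $\sup_t\|u_m\|_{2,\Omega}^2+\int_Q|\nabla u_m|^{p(z)}$. This gives \eqref{eq:pert-4} uniformly in $m$. I then extract a subsequence with $u_m\rightharpoonup u$ weakly-$*$ in $L^\infty(0,T;L^2(\Omega))$ and weakly in $\mathbf{W}_p(Q)$, $|\nabla u_m|^{p-2}\nabla u_m\rightharpoonup\chi$ weakly in $L^{p'(\cdot)}(Q)$ (bounded via \eqref{eq:mod}), and $\partial_t u_m$ bounded in $\mathbf{W}'_p(Q)$. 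Since $\mathbf{W}_p(Q)\hookrightarrow L^{p^-}(0,T;W_0^{1,p^-}(\Omega))$ and $W_0^{1,p^-}(\Omega)\hookrightarrow\hookrightarrow L^2(\Omega)$, the Aubin--Lions--Simon lemma yields $u_m\to u$ strongly in $L^2(Q)$ and a.e.\ in $Q$.

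For the limit passage, the a.e.\ convergence and continuity of $f$ in its last argument give $f(z,u_m)\to f(z,u)$ a.e.; together with the $L^{\lambda'}(Q)$-bound on $f(z,u_m)$ from \eqref{eq:source}--\eqref{eq:pert-2} and the $L^\lambda(Q)$-bound on $u_m$, Vitali's theorem gives $f(z,u_m)\to f(z,u)$ in $L^{\lambda'}(Q)$, so the source passes to the limit in \eqref{eq:def-pert}. It remains to identify $\chi=|\nabla u|^{p-2}\nabla u$, and this Minty--Browder step is the main obstacle: one first derives $\limsup_m\int_Q|\nabla u_m|^{p(z)}\,dz\le \int_Q\chi\cdot\nabla u\,dz$ by testing the limit equation with $u$, comparing with the approximate energy identity, and using $\|u(T)\|_{2,\Omega}\le\liminf_m\|u_m(T)\|_{2,\Omega}$ together with the strong $L^2$-convergence in the source term; then monotonicity of $\xi\mapsto|\xi|^{p-2}\xi$ (inequality \eqref{eq:mon-1}) and Minty's trick identify the flux. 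I expect the time-dependence of $p(z)$ to require care here, but because we test with $u$ itself only the genuine derivative $\tfrac12\frac{d}{dt}\|u\|_{2,\Omega}^2$ appears — handled by \eqref{eq:int-by-parts} — so no chain rule for $\int|\nabla u|^{p(z)}$ is needed. Finally, $u\in C([0,T];L^2(\Omega))$ with attainment of $u_0$ follows from $u\in\mathbf{W}_p(Q)$, $u_t\in\mathbf{W}'_p(Q)$ and $u_m(0)\to u_0$, while lower semicontinuity of the modular \eqref{eq:mod} preserves \eqref{eq:pert-4} in the limit.
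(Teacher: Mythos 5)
Theorem \ref{th:pert-1} is stated in the paper without proof: it is quoted verbatim from \cite{ant-shm-book-2015}, where existence is established precisely by the Faedo--Galerkin scheme with the a priori bound obtained by testing with $u_m$, absorption of the subcritical source term ($\lambda\leq\max\{2,p^--\delta\}$), Aubin--Lions compactness, and a Minty--Browder monotonicity argument to identify the flux. Your proposal reconstructs this standard route correctly (including the two genuinely delicate points, the absorption of $\int_Q|u_m|^{\lambda}$ for $\lambda>2$ and the identification of $\chi$ without a chain rule for the modular), so it matches the approach of the cited source.
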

We will assume that the source $f(z,u)$ has the form

\begin{equation}
\label{eq:power}
\begin{split}
& f(z,u)=-a|u|^{\sigma(z)-2}u+f_0(z),\quad a=const\not=0,
\\
& \sigma\in C^{0}(\overline{Q}),\quad \|\nabla \sigma\|_{\infty,Q}=K<\infty.
\end{split}
\end{equation}
This function satisfies conditions \eqref{eq:source}, \eqref{eq:pert-2} with $\lambda=\sigma^+\equiv \sup_{Q}\sigma(z)$.
By Theorem \ref{th:pert-1} problem \eqref{eq:pert-1} with the nonlinear source satisfying \eqref{eq:power} has at least one weak solution.

\begin{theorem}
\label{th:strong-pert}
Let in the conditions of Theorem \ref{th:pert-1} $\partial\Omega\in C^2$, $p(z)$ satisfies conditions \eqref{eq:exponent}, $f(z,s)$ is of the form \eqref{eq:power}. If

\begin{equation}
\label{eq:lambda}
\begin{split}
& p^-\geq  2,\qquad \text{$2\leq \sigma(z)\leq  1+\dfrac{p(z)}{2}$ in $\overline{Q}$},
\\
& \text{$\displaystyle u_0\in W^{1,s(\cdot)}_0(\Omega)$ with $s=\max\{2,p(x,0)\}$ and $f_0\in L^{2}(0,T;W^{1,2}_0(\Omega))$},
\end{split}
\end{equation}
then every weak solution of problem \eqref{eq:pert-1} is a strong solution in the sense of Definition \ref{def:pert}. The strong solution satisfies estimates \eqref{eq:energy-strong} and possesses the property of higher integrability of the gradient \eqref{eq:higher-integr}:

\begin{equation}
\label{eq:pert-higher}
\int_{Q}|\nabla u|^{p(z)+\delta}\,dz\leq C,\quad \delta\in \left(0,\frac{4p^-}{p^-(n+2)+2n}\right),
\end{equation}
with a constant $C$ depending on the data of problem \eqref{eq:pert-1} and $\delta$, but independent of $u$.
\end{theorem}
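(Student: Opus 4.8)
The plan is to treat the nonlinear term as a fixed source and then invoke Theorem \ref{th:higher-integr}. Let $u$ be a weak solution of \eqref{eq:pert-1} (one exists by Theorem \ref{th:pert-1}), and set $g(z):=f(z,u(z))=-a|u|^{\sigma(z)-2}u+f_0(z)$. By Definition \ref{def:pert}(b), $u$ is then a weak solution of the linear-source problem \eqref{eq:main} with right-hand side $g$. If I can show that $g\in L^2(0,T;W^{1,2}_0(\Omega))$, then, since $\partial\Omega\in C^2$, $p$ satisfies \eqref{eq:exponent}, and $u_0\in W^{1,s(\cdot)}_0(\Omega)$ by \eqref{eq:lambda}, Theorem \ref{th:higher-integr} applies verbatim and yields at once that $u$ is a strong solution, that it obeys \eqref{eq:energy-strong}, and that it enjoys the higher integrability \eqref{eq:higher-integr}, which is exactly \eqref{eq:pert-higher}. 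Thus the whole theorem reduces to the single claim $g\in L^2(0,T;W^{1,2}_0(\Omega))$; as $f_0$ already lies in this space by \eqref{eq:lambda}, it suffices to treat $\psi:=|u|^{\sigma-2}u$.

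Before estimating $\nabla\psi$, I would record the integrability of $u$ available at the weak level, being careful to invoke no strong-solution regularity so as to avoid circularity. From the energy estimate \eqref{eq:pert-4} we have $u\in L^\infty(0,T;L^2(\Omega))$ and $\int_Q|\nabla u|^{p}\,dz<\infty$; the variable Poincaré inequality gives $u\in L^{p(\cdot)}(Q)$, and the parabolic (Gagliardo--Nirenberg) interpolation between $L^\infty(0,T;L^2)$ and the gradient integrability upgrades this to $u\in L^{p(z)+\epsilon}(Q)$ for some $\epsilon>0$ (in the modular sense). This reserve of integrability is precisely what is needed to absorb the logarithm below.

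Next I compute, for $u\neq 0$,
\[
\nabla\psi=(\sigma-1)|u|^{\sigma-2}\nabla u+|u|^{\sigma-2}u\,\ln|u|\,\nabla\sigma ,
\]
and estimate each term in $L^2(Q)$. For the first term I use $\sigma-1\le\sigma^+-1$, Hölder's inequality with the conjugate exponents $\frac{p}{p-2}$ and $\frac{p}{2}$, and the structural bound $2(\sigma-2)\le p-2$ furnished by $\sigma\le 1+\frac p2$, to dominate $\int_Q|u|^{2(\sigma-2)}|\nabla u|^2\,dz$ by a power of $\int_Q(1+|u|^{p})\,dz$ times $\int_Q|\nabla u|^{p}\,dz$, both finite; where $p=2$ the constraint forces $\sigma=2$ and the term collapses to $|\nabla u|^2\in L^1(Q)$. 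For the second term, $\|\nabla\sigma\|_{\infty,Q}\le K$ and $2(\sigma-1)\le p$ reduce matters to $\int_Q|u|^{2(\sigma-1)}|\ln|u||^2\,dz$, and the elementary inequality $|s|^{2(\sigma-1)}|\ln|s||^2\le C_\epsilon(1+|s|^{p+\epsilon})$ together with $u\in L^{p(\cdot)+\epsilon}(Q)$ makes it finite. Finally $\psi\in L^2(Q)$ by the same computation applied to $\int_Q|u|^{2(\sigma-1)}\,dz$, and $\psi$ inherits the zero boundary trace from $u$ since $\psi$ is a continuous function of $u$ vanishing at $u=0$; hence $\psi\in L^2(0,T;W^{1,2}_0(\Omega))$ and $g\in L^2(0,T;W^{1,2}_0(\Omega))$.

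The main obstacle I anticipate is making the two estimates on $\nabla\psi$ rigorous in the variable-exponent setting: the Hölder split and the logarithmic bound must be carried out with $z$-dependent exponents $\sigma(z),p(z)$, which requires the modular form of the parabolic embedding for $u$ and a careful treatment of the borderline loci $\{\sigma=1+\frac p2\}$ and $\{p=2\}$, where the reserve $\epsilon$ and the forced equality $\sigma=2$ are used. Everything else is a direct appeal to Theorem \ref{th:higher-integr} once $g\in L^2(0,T;W^{1,2}_0(\Omega))$ is established.
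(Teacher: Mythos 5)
Your proposal is correct and follows essentially the same route as the paper: reduce to problem \eqref{eq:main} with the frozen source $F(z)=f(z,u(z))$, show $|u|^{\sigma-2}u\in L^2(0,T;W^{1,2}_0(\Omega))$ by splitting $\nabla(|u|^{\sigma-2}u)$ into the $|u|^{\sigma-2}|\nabla u|$ part (handled by Young/H\"older with exponents $\frac{p}{p-2}$, $\frac{p}{2}$ and the constraint $\sigma\le 1+\frac p2$) and the logarithmic part (absorbed by the $\epsilon$-improved integrability $u\in L^{p(\cdot)+\epsilon}(Q)$, which is exactly the paper's Lemma \ref{pro:2}), and then invoke Theorem \ref{th:higher-integr}. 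No substantive differences.
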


\begin{proof}
A weak solution $u(z)$ of problem \eqref{eq:pert-1} can be regarded as a weak solution of problem \eqref{eq:main} with the known right-hand side

\[
F(z):=f(z,u(z)).
\]
Since $\lambda=\sigma^+\geq 2$, then $\lambda'=\dfrac{\lambda}{\lambda-1}\leq 2$ and by the Poincaré inequality

\[
f_0\in L^{2}(0,T;W^{1,2}_0(\Omega))\subset L^{2}(Q)\subseteq L^{\lambda'}(Q).
\]
To be able to apply Theorem \ref{th:higher-integr} and conclude that the weak solution $u(z)$ is a strong solution it remains to check that

\[
\phi(z,u)\equiv |u|^{\sigma(z)-2}u\in L^{2}(0,T;W^{1,2}_0(\Omega)).
\]
It is straightforward to compute

\[
\left|\nabla \phi\right|\leq (\sigma-1)|u|^{\sigma-2}|\nabla u|+|u|^{\sigma-1}|\ln |u|||\nabla \sigma|.
\]
By Young's inequality

\begin{equation}
\label{eq:phi}
\begin{split}
\|\nabla \phi\|_{2,Q}^2 & \leq (\sigma^+-1)\int_Q\left(|u|^{2(\sigma-2)\frac{p(z)}{p(z)-2}}+|\nabla u|^{p}\right)\,dz
\\
&
\quad +\int_Q |u|^{2(\sigma-1)}|\ln|u||^2|\nabla \sigma|^2\,dz
\equiv J_1+J_2+J_3.
\end{split}
\end{equation}
We will make use of the following assertion.

\begin{lemma}
\label{pro:2}
Let us assume that $\partial\Omega\in Lip$, $p(z)\in C(\overline{Q})$, $u\in L^{\infty}(0,T;L^2(\Omega))\cap \mathbf{W}_{p(\cdot)}(Q)$, and

\[
\operatorname{ess}\sup_{(0,T)}\|u(t)\|_{2,\Omega}^{2}+\int_{Q}|\nabla u|^{p(z)}\,dz\leq M.
\]
Then for every $\epsilon\in \left(0,\frac{1}{n}\right)$

\[
\|u\|_{p(\cdot)+\epsilon,Q}\leq C
\]
with a constant $C=C(M,p^\pm,n,\omega, |\Omega|,\epsilon)$, where $\omega$ is the modulus of continuity of $p(z)$, and $p^{\pm}$ are the maximum and minimum of $p(z)$ in $\overline{Q}$.
\end{lemma}

\begin{proof}
Let us take a finite cover of $Q$ by the cylinders $D_i=\Omega_i\times (\tau_i,\tau_{i}+h)$, $i=1,\ldots,K$ such that $\partial\Omega_i\in Lip$, $h>0$, $\tau_1=0$. Denote

\[
p_i^+=\sup_{D_i}p(z), \qquad p_i^-=\inf_{D_i}p(z).
\]
The uniform continuity of $p(z)$ in $\overline{Q}$ allows one to choose $\Omega_i$ and $h>0$ in such a way that

\begin{equation}
\label{eq:oscillation}
\dfrac{p_i^++\epsilon}{p^-_i}<1+\frac{2}{n} \quad \text{for $0<\epsilon<\dfrac{1}{n}$}.
\end{equation}
For every $i=1,2,\ldots,K$ there are two possibilities: $p_i^++\epsilon\leq 2$ and $p_i^++\epsilon>2$.

\medskip

a) $p^+_{i}+\epsilon\leq 2$. By Young's inequality

\[
\|u\|^2_{p(\cdot)+\epsilon,D_i}\leq C\left(1+\|u\|_{2,D_i}^{2}\right)\leq C(1+hM).
\]

\medskip

b) $p_i^++\epsilon>2$. In this case we apply the Gagliardo-Nirenberg inequality

\begin{equation}
\label{eq:G-N}
\begin{split}
\|u\|_{p_i^++\epsilon,\Omega_i}^{p_i^++\epsilon} & \leq C\left(\|\nabla u\|_{p_i^-,\Omega_i}^{\theta_i(p_i^++\epsilon)} +\|u\|_{2,\Omega_i}^{\theta_i(p_i^++\epsilon)}\right) \|u\|^{(1-\theta_i)(p_i^++\epsilon)}_{2,\Omega_i}
\\
& \leq C\left(\|\nabla u\|_{p_i^-,\Omega_i}^{\theta_i(p_i^++\epsilon)} +M^{\frac{1}{2}\theta_i(p_i^++\epsilon)}\right) M^{\frac{1}{2}(1-\theta_i)(p_i^++\epsilon)}
\\
& \leq C(M,p^\pm,n)\left(1+\|\nabla u\|_{p^-_i,\Omega_i}^{\theta_i(p_i^++\epsilon)}\right)
\end{split}
\end{equation}
with the exponent

\[
0<\theta_i=\dfrac{\frac{1}{2}- \frac{1}{p_i^++\epsilon}}{\frac{1}{2} -\frac{n-p_i^-}{np_i^-}}<\frac{p_i^-}{p_i^++\epsilon}.
\]
Notice that $\theta_i(p_i^++\epsilon)<p_i^-$ due to \eqref{eq:oscillation}. Integrating inequality \eqref{eq:G-N} in $t$ over the interval $(\tau_i,\tau_i+h)$ and applying Young's inequality we obtain:

\[
\begin{split}
\int_{D_i}|u|^{p(z)+\epsilon}\,dz & \leq h|\Omega|+\|u\|^{p_i^++\epsilon}_{p_i^++\epsilon,D_i}\leq  C(M,p^\pm,|\Omega|,n)\left(h +\int_{\tau_i}^{\tau_i+h}\|\nabla u\|_{p_i^-,\Omega_i}^{\theta_i(p_i^++\epsilon)}\,dt\right)
\\
& \leq C(M,p^\pm,h,|\Omega|,n)\left(1+\left(\int_{D_i}|\nabla u|^{p_i^-}\,dz\right)^{\frac{\theta_i(p_i^++\epsilon)}{p_i^-}}\right)
\\
& \leq C(M,p^\pm,h)\left(1+\int_{D_i}|\nabla u|^{p(z)}\,dz\right)
\\
&
\leq  C(M,p^\pm,h,|\Omega|,n)\left(1+M\right).
\end{split}
\]
Gathering the estimates for all $i=1,2,\ldots,K$ we obtain the required estimate

\[
\int_{Q}|u|^{p(z)+\epsilon}\,dz\leq \sum_{i=1}^{K}\int_{D_i}|u|^{p(z)+\epsilon}\,dz \leq C(M,p^\pm,\omega,|\Omega|,n).
\]
\end{proof}
\begin{remark} For $\epsilon=0$, a similar inequality was proven in \cite[Lemma 1.32]{ant-shm-book-2015}, see also \cite[Lemma 2.3]{Erhardt-2017}.
\end{remark}

For a weak solution of problem \eqref{eq:pert-1}, the second term $J_2$ of \eqref{eq:phi} is estimated in \eqref{eq:pert-4}. By virtue of Lemma \ref{pro:2} and \eqref{eq:pert-4},  the first term $J_1$ is bounded if

\[
2(\sigma(z)-2)\dfrac{p(z)}{p(z)-2}\leq p(z)\quad \Leftrightarrow \quad \sigma(z)\leq 1+\dfrac{p(z)}{2}.
\]
To estimate $J_3$ we proceed as in \eqref{eq:M-1}, \eqref{eq:M-2}: for every $\epsilon>0$ (small)

\[
J_3\leq CK^2\int_{Q}\left(|u|^{2(\sigma(z)-1)+\epsilon} +|u|^{2(\sigma(z)-1)-\epsilon}\right)\,dz.
\]
Applying Young's inequality and then using Lemma \ref{pro:2} and \eqref{eq:pert-4} we find that for $\epsilon<\frac{1}{n}$

\[
J_3\leq C\left(1+\int_Q|u|^{2(\sigma(z)-1)+\epsilon}\,dz\right)\leq C\left(1+\int_Q|u|^{p(z)+\epsilon}\,dz\right)\leq C
\]
with a constant $C$ depending only on the data.

The weak solution $u(z)$ of problem \eqref{eq:pert-1} can be considered now as a weak solution of problem \eqref{eq:main} with the source
\[
F(z)=f(z,u(z))\in L^2(0,T;W^{1,2}_0(\Omega)).
\]
By Theorem \ref{th:higher-integr} this problem has a unique strong solution $v(z)$ which must coincide with $u(z)$.
\end{proof}

\subsection{Equations with nonpositive  nonlinear sources}

Let $f(z,s)$ satisfy conditions \eqref{eq:power} and $\mathcal{S}_1$, $\mathcal{S}_2$ be the sets of data of problem \eqref{eq:pert-1} such that

\begin{equation}
\label{eq:data-pert}
\begin{cases}
\mathcal{S}_1 & \equiv \{p(z),\sigma(z),u_0,f_0\} \;\text{satisfy the conditions of Theorem \ref{th:strong-pert}},
\\
\mathcal{S}_2 & \equiv \{q(z),\mu(z),v_0,g_0\} \;\text{satisfy the conditions of Theorem \ref{th:pert-1} with}
\\
&  f(z,s)=-a|s|^{\mu(z)-2}s+g_0(z),\quad
\\
&
q(z)\geq 2,\quad 2\leq \mu(z)\leq 1+\dfrac{q(z)}{2}.
\end{cases}
\end{equation}

\begin{theorem}
\label{th:stab-pert}
Let $u(z)$, $v(z)$ be a strong and a weak solutions of problem \eqref{eq:pert-1} corresponding to the data sets $\mathcal{S}_1$ and $\mathcal{S}_2$. If $a\geq 0$, the data satisfy conditions \eqref{eq:data-pert}, and the exponents $p(z)$ and $q(z)$ satisfy condition \eqref{eq:proximity}, then

\[
\begin{split}
\operatorname{ess}\sup_{(0,T)} & \|u(t)-v(t)\|_{2,\Omega}^2  +\int_{Q}|\nabla (u-v)|^{q(z)}\,dz
\\
& \leq C\left(\|u_0-v_0\|_{2,\Omega}^2+\|f_0-g_0\|_{2,Q}+\sup_Q|p-q|^{q'}+\sup_Q|\sigma-\mu|\right)
\end{split}
\]
with a constant $C$ depending only on the data.
\end{theorem}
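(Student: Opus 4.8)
The plan is to reduce the problem to the setting of Theorem \ref{th:stab-1} by treating the reaction terms as prescribed right-hand sides. By Theorem \ref{th:strong-pert} the strong solution $u$ solves problem \eqref{eq:main} with source $F_1(z)=-a|u|^{\sigma(z)-2}u+f_0(z)$, and $v$ is the weak solution of \eqref{eq:main} with source $F_2(z)=-a|v|^{\mu(z)-2}v+g_0(z)$; the integrability established in the proofs of Theorems \ref{th:pert-1} and \ref{th:strong-pert} shows that $F_1,F_2$ lie in the spaces needed to run the energy argument of Lemma \ref{pro:1}. First I would take $u-v$ as the test function in the two weak formulations, subtract, and integrate by parts in the time term, arriving at the analogue of \eqref{eq:ident-1-prim} whose right-hand side now carries the extra contribution
\[
\int_Q(F_1-F_2)(u-v)\,dz=\int_Q(f_0-g_0)(u-v)\,dz-a\int_Q\left(|u|^{\sigma-2}u-|v|^{\mu-2}v\right)(u-v)\,dz.
\]

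The only genuinely new ingredient is the reaction integral, and here the hypothesis $a\ge 0$ is decisive. Writing $m=\min\{\sigma,\mu\}$ and $M=\max\{\sigma,\mu\}$, I would split
\[
|u|^{\sigma-2}u-|v|^{\mu-2}v=\left(|u|^{m-2}u-|v|^{m-2}v\right)+R,
\]
where $R=|u|^{\sigma-2}u-|u|^{m-2}u$ when $\sigma\ge\mu$ and $R=|v|^{m-2}v-|v|^{\mu-2}v$ when $\mu>\sigma$; in either case $R$ is a pure exponent mismatch evaluated at the solution that carries the larger exponent $M$. Since $m\ge 2$, the map $s\mapsto|s|^{m-2}s$ is monotone, so $\left(|u|^{m-2}u-|v|^{m-2}v\right)(u-v)\ge 0$; as $a\ge 0$, the corresponding term $-a\int_Q(\cdots)(u-v)\,dz$ is nonpositive and, sitting on the right-hand side, can simply be discarded. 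This is precisely where $a\ge 0$ enters, and it is why no Gronwall argument — and hence no exponential-in-time constant, in contrast with the Lipschitz case of Subsection \ref{subsec:reaction} — is needed.

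For $R$ I would follow Lemma \ref{le:est-1b}. Denoting by $w\in\{u,v\}$ the solution carrying $M$, the finite increments formula gives, as in \eqref{eq:M-0},
\[
|R|\le|\sigma-\mu|\int_0^1|w|^{m-1+\theta(M-m)}\bigl|\ln|w|\bigr|\,d\theta,
\]
and the logarithm is absorbed into an arbitrarily small power of $|w|$ separately on $\{|w|\ge1\}$ and $\{|w|<1\}$, exactly as in \eqref{eq:M-1} and \eqref{eq:M-2}. Pairing $R$ with $u-v$, applying the Cauchy--Schwarz inequality and the uniform bound $\|u-v\|_{2,Q}\le C$ furnished by \eqref{eq:energy-strong} and \eqref{eq:pert-4}, the mismatch contribution is bounded by $C\sup_Q|\sigma-\mu|$ as soon as $\int_Q|w|^{2(M-1)+\epsilon}\,dz\le C$ for a small $\epsilon<\frac1n$. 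This last bound is exactly what Lemma \ref{pro:2} delivers: the constraints $\sigma\le1+\frac p2$ and $\mu\le1+\frac q2$ give $2(M-1)\le p$ when $w=u$ and $2(M-1)\le q$ when $w=v$, so the required power never exceeds the exponent whose integrability Lemma \ref{pro:2} controls. Choosing the split according to the larger exponent is precisely what keeps these sharp bounds applicable, and organizing this bookkeeping is the main obstacle of the proof.

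With the reaction term thus controlled by $C\sup_Q|\sigma-\mu|$, the remaining steps copy the proof of Theorem \ref{th:stab-1} verbatim: the elliptic difference is recast as in \eqref{eq:ident-1-prim}, the integral produced by replacing $|\nabla u|^{p-2}$ with $|\nabla u|^{q-2}$ is estimated by $C\sup_Q|p-q|^{q'}$ through Lemma \ref{le:est-1b}, the source term $\int_Q(f_0-g_0)(u-v)\,dz$ is handled as the term $j_2$, and Lemma \ref{le:est-1} converts the monotone quantity $\mathcal{I}$ into $\int_Q|\nabla(u-v)|^{q(z)}\,dz$. Assembling these contributions as in \eqref{eq:start-3}--\eqref{eq:stab-1}, with the quantity $\mathcal{R}$ of Theorem \ref{th:stab-1} augmented by the new term $\sup_Q|\sigma-\mu|$, yields the asserted estimate.
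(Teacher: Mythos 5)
Your proposal is correct and follows essentially the same route as the paper: test with $u-v$, discard the monotone part of the reaction difference using $a\ge 0$ and $\min\{\sigma,\mu\}\ge 2$, estimate the exponent-mismatch remainder via the finite-increments formula together with Lemma \ref{pro:2}, and treat the remaining terms exactly as in Theorem \ref{th:stab-1}. The only (harmless) deviation is that the paper always anchors the mismatch term at the strong solution $u$, writing $|u|^{\sigma-2}u-|u|^{\mu-2}u$, whereas you anchor it at whichever solution carries the pointwise larger exponent — which if anything makes the application of Lemma \ref{pro:2} more transparent — and, since $q\ge 2$ under \eqref{eq:data-pert} forces $Q_-=\emptyset$, inequality \eqref{eq:mon-1} yields the linear-in-$\mathcal{R}$ bound of the statement directly rather than the powered form of Lemma \ref{le:est-1}.
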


\begin{proof} It is known that problem \eqref{eq:pert-1} with the source defined by \eqref{eq:power} and the constant $a\geq 0$ has at  most one weak solution - \cite[Th.4.6]{ant-shm-book-2015}. Hence, $u(z)$ and $v(z)$ are unique as the weak solutions of problem \eqref{eq:pert-1}. Moreover, $u(z)$ is the strong solution and possesses the property of global higher integrability of the gradient \eqref{eq:pert-higher}. Using this property together with Lemma \ref{pro:test} we conclude that the function $u-v$ is an admissible test-function in the integral identities \eqref{eq:def-pert} for $u$ and $v$. Combining these identities and integrating by parts in $t$ we obtain the equality: for a.e. $t\in (0,T)$

\[
\begin{split}
\frac{1}{2} & \|u-v\|_{2,\Omega}^2(t) + \int_0^t\int_{\Omega}(|\nabla u|^{p(z)-2}\nabla u-|\nabla v|^{q(z)-2}\nabla v)\cdot\nabla (u-v))\,dz
\\
& = -a\int_0^t\int_{\Omega}\left(|u|^{\sigma(z)-2}u-|v|^{\mu(z)-2}v\right)(u-v)\,dz 
+\frac{1}{2}\|u_0-v_0\|^2_{2,\Omega}+\int_0^t\int_{\Omega}(u-v)(f_0-g_0).
\end{split}
\]
Rearranging this equality, applying \eqref{eq:mon-1}, and using the fact that the last equality holds for a.e. $t\in (0,T)$, we arrive at the inequality: for a.e. $t\in (0,T)$

\begin{equation}
\label{eq:inter}
\begin{split}
\frac{1}{2} \|u-v\|_{2,\Omega}^2(t) & + \int_{Q}(|\nabla u|^{q(z)-2}\nabla u-|\nabla v|^{q(z)-2}\nabla v)\cdot \nabla (u-v)\,dz
\\
& \leq  \int_{Q}(|\nabla u|^{q(z)-2}\nabla u-|\nabla u|^{p(z)-2}\nabla u)\cdot \nabla (u-v)\,dz
\\
&
-a\int_{Q}\left(|u|^{\sigma(z)-2}u-|u|^{\mu(z)-2}u\right)(u-v)\,dz
\\
&
+\frac{1}{2}\|u_0-v_0\|^2_{2,\Omega}+\int_Q(u-v)(f_0-g_0)\,dz
\\
& \equiv \mathcal{K}_1+\mathcal{K}_2+\mathcal{K}_3+\mathcal{K}_4.
\end{split}
\end{equation}

The term $\mathcal{K}_1$ is already estimated in Lemma \ref{le:est-1b}:

\[
\mathcal{K}_1\leq C\sup_Q|p-q|^{q'}
\]
with a constant $C$ depending only on the data. To estimate $\mathcal{K}_2$ we  fix an arbitrary $z\in Q$ and represent
\[
\begin{split}
|u|^{\sigma-2}-|u|^{\mu-2} & =\int_{0}^1\dfrac{d}{d\theta}\left(|u|^{\theta \sigma+(1-\theta)\mu-2}\right)\,d\theta
= \int_{0}^1|u|^{\theta \sigma+(1-\theta)\mu-2}\ln |u|\,d\theta (\sigma-\mu).
\end{split}
\]
For the sake of definiteness, let us assume that $\sigma(z)>\mu(z)$. For every sufficiently small $\epsilon>0$ there is a constant $C_\epsilon$ such that
\[
\begin{split}
\left||u|^{\sigma(z)-2}-|u|^{\mu(z)-2}\right||u| & \leq (\sigma(z)-\mu(z))\begin{cases}
|u|^{\sigma(z)-1}|\ln |u|| & \text{if $|u|>1$},
\\
|u|^{\mu(z)-1}|\ln |u|| & \text{if $|u|\leq 1$},
\end{cases}
\\
&
\leq C_\epsilon (\sigma(z)-\mu(z))\begin{cases}
|u|^{\sigma(z)-1+\epsilon} & \text{if $|u|>1$},
\\
|u|^{\mu(z)-1-\epsilon} & \text{if $|u|\leq 1$}.
\end{cases}
\\
& \leq C_{\epsilon} (\sigma(z)-\mu(z))\left(1+|u|^{\sigma(z)-1+\epsilon}\right).
\end{split}
\]
If $\sigma(z)\leq \mu(z)$, we obtain the same inequalities replacing $\mu$ and $\sigma$. Thus, for every $z\in Q$

\[
\left||u|^{\sigma(z)-2}-|u|^{\mu(z)-2}\right||u| \leq C|\sigma(z)-\mu(z)|\left(1+|u|^{\sigma(z)-1+\epsilon}+|u|^{\mu(z)-1+\epsilon}\right).
\]
It follows that

\[
\begin{split}
\mathcal{K}_2 & \leq a\sup_Q|\sigma-\mu|C\int_Q\left(1+|u-v|^2+ |u|^{2(\sigma-1)+2\epsilon}+|u|^{2(\mu-1)+2\epsilon}\right)\,dz
\\
& \leq a\sup_Q|\sigma-\mu|C\Big(|Q|+\|u\|^2_{2,Q}+\|v\|^2_{2,Q}
+\int_Q |u|^{2(\sigma-1)+2\epsilon}\,dz+\int_Q|u|^{2(\mu-1)+2\epsilon}\,dz\Big),
\end{split}
\]
where $\epsilon>0$ is still an arbitary small number. Lemma \ref{pro:2} and equality \eqref{eq:pert-4} for $u$ and $v$ yield that if

\[
\begin{split}
& 2(\sigma(z)-1)\leq p(z),\quad
2(\mu(z)-1)\leq q(z), \quad 0<\epsilon<\frac{1}{2n},
\end{split}
\]
then there is a constant $C$, depending only on the data of problems \eqref{eq:pert-1} for $u$ and $v$, such that

\[
\int_Q|u|^{2(\sigma(z)-1)+2\epsilon}\,dz+ \int_Q|u|^{2(\mu(z)-1)+2\epsilon}\,dz\leq C.
\]
The estimate on $\mathcal{K}_2$ is completed. The estimate on $\mathcal{K}_4$ follows from \eqref{eq:pert-4}: by H\"older's inequality

\[
\mathcal{K}_4\leq \left(\|u\|_{2,Q}+\|v\|_{2,Q}\right)\|f_0-g_0\|_{2,Q}\leq C \|f_0-g_0\|_{2,Q}.
\]
The term $\mathcal{K}_3$ does not require a special estimating.

Substituting into \eqref{eq:inter} the estimates on $\mathcal{K}_j$ and using \eqref{eq:mon-1} we finally obtain:

\[
\begin{split}
\operatorname{ess}\sup_{(0,T)} & \|u(t)-v(t)\|_{2,\Omega}^2  +\int_{Q}|\nabla (u-v)|^{q(z)}\,dz
\\
& \leq C\left(\|u_0-v_0\|_{2,\Omega}^2+\|f_0-g_0\|_{2,Q}+\sup_Q|p-q|^{q'}+\sup_Q|\sigma-\mu|\right)
\end{split}
\]
with a constant $C$ depending only on the data.
\end{proof}

\subsection{Equations with Lipschitz-continuous sources}
\label{subsec:reaction}
Let us consider the nonlinear source of the form

\begin{equation}
\label{eq:source-reaction}
f(z,s)=\phi(s)+f_0(z)
\end{equation}
with the function $\phi$ subject to the following conditions:

\begin{equation}
\label{eq:phi-cond}
\begin{split}
&
\text{$\phi(0)=0$, $\phi(s)$ is Lipshitz-continuous in $\mathbb{R}$ with the constant $D$}.
\end{split}
\end{equation}
Since
\[
|f(z,s)|\leq |f_0|+|\phi(s)-\phi(0)|\leq D|s|+|f_0|,
\]
it follows from Theorem \ref{th:pert-1} that problem \eqref{eq:pert-1} with the nonlinear source $f$ satisfying \eqref{eq:source-reaction} and \eqref{eq:phi-cond} has at least one weak solution $u(z)$. Moreover, by \cite[Th.4.7]{ant-shm-book-2015} this solution is unique.

If $\{p(z),u_0,f_0\}$ satisfy the conditions of Theorem \ref{th:strong-pert} and
\begin{equation}
\label{eq:incl-phi}
\phi(u)\in L^2(0,T;W^{1,2}_0(\Omega)),
\end{equation}
then the weak solution $u(z)$ is a strong solution. Inclusion \eqref{eq:incl-phi} immediately follows from \eqref{eq:pert-4} and \eqref{eq:phi-cond} : since $\phi(u)$ is Lipschitz-continuous, it is a.e. differentiable and
\[
\int_Q|\nabla \phi(u)|^2\,dz=\int_Q|\phi'(u)|^2|\nabla u|^{2}\,dz\leq D^2\int_Q(1+|\nabla u|^{p(z)})\,dz\leq C.
\]

Let us return to problem \eqref{eq:pert-1} with the source satisfying \eqref{eq:source-reaction}, \eqref{eq:phi-cond}.

\begin{theorem}
\label{th:stab-reaction}
Let $u(z)$, $v(z)$ be the weak solution of problem \eqref{eq:pert-1} with the source $f(z,s)$ satisfying conditions \eqref{eq:source-reaction}, \eqref{eq:phi-cond}, and the data $\mathcal{S}_1\equiv \{p(z),u_0,f_{0}(z)\}$, $\mathcal{S}_2\equiv \{q(z),v_0,g_{0}(z)\}$ that satisfy the conditions of Theorem \ref{th:higher-integr} and \ref{th:weak-sol} respectively. It $p(z)$, $q(z)$ satisfy condition \eqref{eq:proximity}, then

\[
\operatorname{ess}\sup_{(0,T)}\|u-v\|_{2,\Omega}^2(t)+\int_Q |\nabla (u-v)|^{q(z)}\,dz\leq C\left(\widetilde{\mathcal{R}}+\widetilde{\mathcal{R}}^{\frac{q^-}{2}} +\widetilde{\mathcal{R}}^{\frac{q^+}{2}}\right)
\]
where

\[
\widetilde{\mathcal{R}}:=C{\rm e}^{DT}\left(\sup_Q|p-q|^{q'}+\|f_0-g_0\|_{2,Q} +\|u_0-v_0\|_{2,\Omega}^{2}\right)
\]
and $C$ is a constant depending only on the data.
\end{theorem}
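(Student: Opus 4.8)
The plan is to follow the scheme of the proof of Theorem \ref{th:stab-pert}, the only essential change being the treatment of the reaction term, whose sign is no longer at our disposal. First I would record that, under the stated hypotheses, $u$ is a strong solution of \eqref{eq:pert-1} enjoying the higher integrability \eqref{eq:pert-higher} (this is exactly the content of inclusion \eqref{eq:incl-phi} and the remarks preceding the theorem), while $v$ is the unique weak solution associated with $\mathcal{S}_2$. By Lemma \ref{pro:test} together with these regularity properties, $u-v$ is an admissible test-function in the identities \eqref{eq:def-pert} written for $u$ and $v$. Subtracting the two identities, inserting $u-v$, and integrating by parts in $t$ over $(0,t)$, I obtain for a.e. $t\in(0,T)$ the energy equality
\begin{equation*}
\begin{split}
\frac12\|u-v\|_{2,\Omega}^2(t) &+ \int_0^t\!\!\int_\Omega\left(|\nabla u|^{p-2}\nabla u-|\nabla v|^{q-2}\nabla v\right)\cdot\nabla(u-v)\,dz
\\
&= \frac12\|u_0-v_0\|_{2,\Omega}^2 + \int_0^t\!\!\int_\Omega(\phi(u)-\phi(v))(u-v)\,dz + \int_0^t\!\!\int_\Omega(f_0-g_0)(u-v)\,dz.
\end{split}
\end{equation*}
Note that I deliberately keep the running-time integrals $\int_0^t\int_\Omega$ rather than passing to $\int_Q$, since this is what makes a Grönwall argument possible.

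Next I would rewrite the principal part by adding and subtracting $|\nabla u|^{q-2}\nabla u$, so that the genuinely monotone quantity $\mathcal{I}(t)=\int_0^t\int_\Omega(|\nabla u|^{q-2}\nabla u-|\nabla v|^{q-2}\nabla v)\cdot\nabla(u-v)\,dz$, nonnegative by \eqref{eq:mon-1}, remains on the left, while the operator-mismatch term $\int_0^t\int_\Omega(|\nabla u|^{q-2}\nabla u-|\nabla u|^{p-2}\nabla u)\cdot\nabla(u-v)\,dz$ is moved to the right. The latter is bounded exactly as the term $\mathcal{K}_1$, namely by $C\sup_Q|p-q|^{q'}$ via \eqref{eq:j-3} of Lemma \ref{le:est-1b}, whose constants depend on $\|\nabla u\|_{q(\cdot),Q}$, $\|\nabla u\|_{p(\cdot),Q}$, $\|\nabla v\|_{q(\cdot),Q}$ and are controlled through \eqref{eq:energy-strong}, \eqref{eq:pert-higher}, \eqref{eq:pert-4}. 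The term with $f_0-g_0$ is estimated by H\"older's inequality and \eqref{eq:pert-4}, giving $C\|f_0-g_0\|_{2,Q}$.

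The new ingredient is the reaction difference. Since $u$ and $v$ share the \emph{same} Lipschitz function $\phi$ with constant $D$, there is no exponent-mismatch as in $\mathcal{K}_2$; instead I simply bound $\int_0^t\int_\Omega(\phi(u)-\phi(v))(u-v)\,dz\le D\int_0^t\|u-v\|_{2,\Omega}^2(\tau)\,d\tau$. Collecting the estimates and discarding the nonnegative $\mathcal{I}(t)$ yields, for a.e. $t$,
\[
\|u-v\|_{2,\Omega}^2(t)\leq R_0 + 2D\int_0^t\|u-v\|_{2,\Omega}^2(\tau)\,d\tau,\qquad R_0 = C\left(\|u_0-v_0\|_{2,\Omega}^2+\sup_Q|p-q|^{q'}+\|f_0-g_0\|_{2,Q}\right).
\]
Grönwall's inequality then gives $\operatorname{ess}\sup_{(0,T)}\|u-v\|_{2,\Omega}^2\leq R_0\,{\rm e}^{2DT}$, which is the origin of the exponential-in-time factor displayed as ${\rm e}^{DT}$ in $\widetilde{\mathcal{R}}$ (the precise constant in the exponent being immaterial and absorbed into $C$).

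Finally, I would reinsert this uniform $L^2$ bound into the energy inequality, now retaining $\mathcal{I}$, to obtain $\mathcal{I}\equiv\mathcal{I}(T)\le C\,{\rm e}^{2DT}R_0=:\widetilde{\mathcal{R}}$, and then invoke inequality \eqref{eq:I-est} of Lemma \ref{le:est-1} to convert $\mathcal{I}$ into the variable-exponent gradient norm: $\int_Q|\nabla(u-v)|^{q(z)}\,dz\le C(\mathcal{I}+\mathcal{I}^{q^-/2}+\mathcal{I}^{q^+/2})\le C(\widetilde{\mathcal{R}}+\widetilde{\mathcal{R}}^{q^-/2}+\widetilde{\mathcal{R}}^{q^+/2})$. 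Combining with the $\sup$-bound $\operatorname{ess}\sup_{(0,T)}\|u-v\|_{2,\Omega}^2\le\widetilde{\mathcal{R}}$ finishes the proof. The main obstacle, and the only genuine departure from Theorem \ref{th:stab-pert}, is precisely that $\phi$ is not sign-definite: the reaction term can no longer be absorbed with a favourable sign but must be closed through Grönwall's lemma, which forces the exponential time-growth of the constant.
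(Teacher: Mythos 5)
Your proposal is correct and follows essentially the same route as the paper: test with $u-v$, keep the monotone part $\mathcal{I}$ on the left, estimate the operator-mismatch and data terms exactly as the $\mathcal{K}_j$ from Theorem \ref{th:stab-pert}, bound the reaction difference by $D(u-v)^2$ using the Lipschitz property, close the $L^2$ estimate with Gr\"onwall, and then reinsert to control $\mathcal{I}$ and apply Lemma \ref{le:est-1}. The only cosmetic difference is that the paper runs Gr\"onwall in differential form via $Y(t)=\int_0^t\int_\Omega(u-v)^2\,dz$ while you use the integral form directly on $\|u-v\|_{2,\Omega}^2(t)$, which changes nothing of substance.
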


\begin{proof} Since $u(z)$ is a weak solution of problem \eqref{eq:pert-1}  and the data $\mathcal{S}_1$ satisfy the conditions of Theorem \ref{th:strong-pert}, $u(z)$ is a strong solution of problem \eqref{eq:pert-1}. We will argue as in  the proof of Theorem \ref{th:stab-pert}. Combining identities \eqref{eq:def-pert} for $u$ and $v$ with the test-function $u-v$, we arrive at the equality: for a.e. $t\in (0,T)$
\begin{equation}
\label{eq:est-1}
\begin{split}
\frac{1}{2} & \|u-v\|_{2,\Omega}^2(t) + \int_0^t\int_{\Omega}(|\nabla u|^{q(z)-2}\nabla u-|\nabla v|^{q(z)-2}\nabla v)\cdot\nabla (u-v))\,dz
\\
& = \int_0^t\int_{\Omega}(|\nabla u|^{q(z)-2}\nabla u-|\nabla u|^{p(z)-2}\nabla u)\cdot\nabla (u-v)\,dz
+\frac{1}{2}\|u_0-v_0\|^2_{2,\Omega}
\\
&
+\int_0^t\int_{\Omega}(u-v)(f_0-g_0)\,dz
+\int_{0}^t\int_{\Omega}\left(\phi(u)-\phi(v)\right)(u-v)\,dz.
\end{split}
\end{equation}
From this equality we derive the inequality
\begin{equation}
\label{eq:est-2}
\begin{split}
\frac{1}{4} & \|u-v\|_{2,\Omega}^2(t) + \dfrac{1}{2}\int_Q(|\nabla u|^{q(z)-2}\nabla u-|\nabla v|^{q(z)-2}\nabla v)\cdot\nabla (u-v)\,dz
\\
& \leq \mathcal{K}_1 +\mathcal{K}_3+\mathcal{K}_4+ \int_{0}^{t}\int_\Omega\left(\phi(u)-\phi(v)\right)(u-v)\,dz
\end{split}
\end{equation}
which holds for a.e. $t\in (0,T)$. The terms $\mathcal{K}_j$ were introduced and estimated in the proof of Theorem \ref{th:stab-pert}. It remains to estimate the last term on the right-hand side of \eqref{eq:est-2}. By the assumptions on $\phi(s)$ and the Lagrange intermediate value formula we have

\[
\left|\phi(u)-\phi(v)\right||u-v|\leq \int_{0}^{1}\left|\dfrac{d}{d\theta}\phi(\theta u+(1-\theta)v)\right|\,d\theta |u-v|\leq D(u-v)^2.
\]
Let us denote
\[
Y(t)=\int_{0}^{t}\int_{\Omega}(u-v)^{2}\,dz
\]
and write \eqref{eq:est-2} in the form

\begin{equation}
\label{eq:est-3}
Y'(t)+2\mathcal{I}\leq  D Y(t)+\mathcal{K}_1 +\mathcal{K}_3+\mathcal{K}_4.
\end{equation}
Omitting the nonnegative term $2\mathcal{I}$ and integrating the resulting inequality 
we obtain the estimate
\[
Y(t)\leq \left({\rm e}^{Dt}-1\right)\dfrac{1}{D}\left(\mathcal{K}_1 +\mathcal{K}_3+\mathcal{K}_4\right).
\]
Substitution of this inequality into \eqref{eq:est-3} gives: for a.e. $t\in (0,T)$
\[
\begin{split}
\|u-v\|_{2,\Omega}^2(t) & + 2\mathcal{I} \leq {\rm e}^{Dt}\left(\mathcal{K}_1 +\mathcal{K}_3+\mathcal{K}_4\right)
\\
& \leq C{\rm e}^{DT}\left(\sup_Q|p-q|^{q'}+\|f_0-g_0\|_{2,Q} +\|u_0-v_0\|_{2,\Omega}^{2}\right)\equiv \widetilde{\mathcal{R}}.
\end{split}
\]
To complete the proof we combine this estimate with the inequality proven in Lemma~\ref{le:est-1}.
\end{proof}

\begin{remark}
\label{rem:combined-sources}
The same arguments show that the stability result remains true for the solutions of problem \eqref{eq:pert-1} with the source $f(z,s)=-a|s|^{\sigma(z)-2}s+\phi(s)+f_0(z)$, provided that $p$, $q$, $\sigma$, $\mu$ satisfy the conditions of Theorem \ref{th:stab-pert} and $\phi(s)$ satisfies conditions \eqref{eq:phi-cond}. The only difference in the proofs consists in the presence of the term $\mathcal{K}_2$ on the right-hand side of \eqref{eq:est-2}.
\end{remark}

\subsection{Convergence of sequences of strong solutions}

\begin{lemma}
\label{le:conv-pert-1}
Let $\{u_k\}$ be the sequence of solutions of problem \eqref{eq:pert-1} with the exponents $\{p_k\}$, the initial functions $\{u_{0k}\}$ and the source terms $\{f_k\}$ of the form \eqref{eq:power}. Assume that $\{p_k,\sigma_k,u_{0k},f_{0k}\}$ satisfy the conditions of Theorem \ref{th:strong-pert}, $a\geq 0$, and

\[
\text{$u_{0k}\to u_0$ in $L^{2}(\Omega)$},\quad \text{$f_{0k}\to f_0$ in $L^2(Q)$},\quad \text{$\sigma_k(z)\to \sigma^\ast(z)$ uniformly in $\overline{Q}$}.
\]
There exists a subsequence $\{p_{k_m}\}$ and $p^\ast\in C^\alpha(\overline{Q})$ such that $p_{k_m}\to p^\ast$ in $C^\alpha(\overline{Q})$ and the sequence $\{u_{k_m}\}$ converges in $\mathbf{W}_{p^\ast}(Q)$ to the weak solution of problem \eqref{eq:pert-1} with the data $\{p^\ast,\sigma^\ast,u_0,f_0\}$.
\end{lemma}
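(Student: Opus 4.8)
The plan is to follow the scheme of the proof of Theorem~\ref{th:comp-p-Laplace}, replacing the $p(z)$-Laplacian stability estimate of Theorem~\ref{th:stab-1} by its counterpart for nonlinear sources, Theorem~\ref{th:stab-pert}. Throughout I read the hypothesis ``$\{p_k,\sigma_k,u_{0k},f_{0k}\}$ satisfy the conditions of Theorem~\ref{th:strong-pert}'' as holding uniformly in $k$: the $p_k$ obey \eqref{eq:cond-comp} with common constants $p^\pm,L$, the $\sigma_k$ are equi-Lipschitz with $\|\nabla\sigma_k\|_{\infty,Q}\le K$ and satisfy $2\le\sigma_k(z)\le 1+\tfrac{p_k(z)}{2}$, and the higher norms $\|u_{0k}\|_{W^{1,s(\cdot)}_0(\Omega)}$, $\|f_{0k}\|_{L^2(0,T;W^{1,2}_0(\Omega))}$ are bounded uniformly in $k$. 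Under these hypotheses Theorem~\ref{th:strong-pert} furnishes estimates \eqref{eq:energy-strong} and \eqref{eq:pert-higher} for each strong solution $u_k$ with constants independent of $k$, and this is exactly what keeps the constant $C$ in Theorem~\ref{th:stab-pert} independent of $k$ when it is applied to the $u_k$.

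First I would extract the limit exponent. Since $\{p_k\}$ is uniformly bounded and equi-Lipschitz, the Ascoli--Arzel\`a theorem yields a subsequence $\{p_{k_m}\}$ and a function $p^\ast\in C^{\alpha}(\overline Q)$, $\alpha\in(0,1)$, with $p^-\le p^\ast\le p^+$ and $p_{k_m}\to p^\ast$ in $C^{\alpha}(\overline Q)$; in particular $\sup_Q|p_{k_m}-p^\ast|\to0$. Passing to the limit in the pointwise inequalities $2\le\sigma_{k_m}(z)\le 1+\tfrac{p_{k_m}(z)}{2}$ and using $\sigma_{k_m}\to\sigma^\ast$ uniformly shows that $p^\ast\ge2$ and $2\le\sigma^\ast(z)\le 1+\tfrac{p^\ast(z)}{2}$, i.e. the limit data $\{p^\ast,\sigma^\ast\}$ satisfy the restrictions \eqref{eq:data-pert} imposed on $\mathcal S_2$.

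Next I would establish the Cauchy property. For $m,s$ large enough $\sup_Q|p_{k_m}-p_{k_s}|<1$, so \eqref{eq:proximity} holds for the pair $(p_{k_m},p_{k_s})$; regarding $u_{k_m}$ as the strong solution and $u_{k_s}$ as the weak solution (both exist and are unique since $a\ge0$), Theorem~\ref{th:stab-pert} gives
\[
\operatorname{ess}\sup_{(0,T)}\|u_{k_m}-u_{k_s}\|_{2,\Omega}^2+\int_Q|\nabla(u_{k_m}-u_{k_s})|^{p_{k_s}(z)}\,dz\le C\,\mathcal R_{m,s},
\]
with
\[
\mathcal R_{m,s}=\|u_{0k_m}-u_{0k_s}\|_{2,\Omega}^2+\|f_{0k_m}-f_{0k_s}\|_{2,Q}+\sup_Q|p_{k_m}-p_{k_s}|^{p_{k_s}'}+\sup_Q|\sigma_{k_m}-\sigma_{k_s}|
\]
and $C$ independent of $m,s$. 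Interchanging the roles of $p_{k_m}$ and $p_{k_s}$ yields the same bound with the exponent $p_{k_m}$, and gathering the two controls the integral of $|\nabla(u_{k_m}-u_{k_s})|^{\max\{p_{k_m},p_{k_s}\}}$; since $p_{k_s}\to p^\ast$, condition \eqref{eq:proximity} holds for $q=p^\ast$ and $p=\max\{p_{k_m},p_{k_s}\}$, and one passes to the exponent $p^\ast$ exactly as in the proof of Theorem~\ref{th:comp-p-Laplace}(i). Because $u_{0k}\to u_0$ and $f_{0k}\to f_0$ in $L^2$, while $\sup_Q|p_{k_m}-p_{k_s}|\to0$ and $\sigma_k\to\sigma^\ast$ uniformly, we have $\mathcal R_{m,s}\to0$, so $\{u_{k_m}\}$ is a Cauchy sequence in $\mathbf W_{p^\ast}(Q)$.

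Finally I would identify the limit. Let $u^\ast$ be the weak solution of problem \eqref{eq:pert-1} with data $\{p^\ast,\sigma^\ast,u_0,f_0\}$, which exists by Theorem~\ref{th:pert-1} (note $p^\ast\in C^\alpha(\overline Q)\subset C_{\rm log}(\overline Q)$, the growth exponent $\lambda=\sup_Q\sigma^\ast\ge2$, and $f_0\in L^2(Q)\subseteq L^{\lambda'}(Q)$) and is unique since $a\ge0$. As $\{p^\ast,\sigma^\ast,u_0,f_0\}$ satisfies \eqref{eq:data-pert} by the limit passage above and \eqref{eq:proximity} holds for $(p_{k_m},p^\ast)$ for large $m$, Theorem~\ref{th:stab-pert} applied to the pair $(u_{k_m},u^\ast)$ gives
\[
\operatorname{ess}\sup_{(0,T)}\|u_{k_m}-u^\ast\|_{2,\Omega}^2+\int_Q|\nabla(u_{k_m}-u^\ast)|^{p^\ast(z)}\,dz\le C\Big(\|u_{0k_m}-u_0\|_{2,\Omega}^2+\|f_{0k_m}-f_0\|_{2,Q}+\sup_Q|p_{k_m}-p^\ast|^{(p^\ast)'}+\sup_Q|\sigma_{k_m}-\sigma^\ast|\Big),
\]
whose right-hand side tends to $0$. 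Hence $u_{k_m}\to u^\ast$ in $\mathbf W_{p^\ast}(Q)$, which is the assertion. The main obstacle is securing constants independent of $k$: this is what forces the uniform control of the structural constants $p^\pm,L,K$ and of the higher norms entering Theorem~\ref{th:strong-pert}, whereas the remaining steps are faithful transcriptions of the $p(z)$-Laplacian argument.
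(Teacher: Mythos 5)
Your proposal is correct and follows exactly the route the paper intends: the authors state that the lemma is "an immediate byproduct" of Theorem \ref{th:stab-pert} combined with the compactness scheme of Theorem \ref{th:comp-p-Laplace} and Corollary \ref{cor:data-k}, and your write-up is a faithful (and more detailed) execution of that plan, including the points the paper leaves implicit — the uniformity in $k$ of the structural constants, the verification that $\{p^\ast,\sigma^\ast\}$ inherit the constraints \eqref{eq:data-pert}, and the two applications of the stability estimate (Cauchy property, then identification of the limit).
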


\begin{lemma}
\label{le:conv-pert-2}
Let $\{u_k\}$ be the sequence of solutions of problem \eqref{eq:pert-1} with the exponents $\{p_k\}$, the initial functions $\{u_{0k}\}$ and the source terms $\{f_k\}$ of the form \eqref{eq:source-reaction}. Assume that $\{p_k,u_{0k},f_{0k}\}$ satisfy the conditions of Theorem \ref{th:higher-integr}, $\phi$ satisfies conditions \eqref{eq:phi-cond}, and

\[
\text{$u_{0k}\to u_0$ in $L^{2}(\Omega)$},\quad \text{$f_{0k}\to f_0$ in $L^2(Q)$}.
\]
Then the sequence $\{p_k\}$ converges, up to a subsequence, to a function $p^\ast\in C^{\alpha}(\overline{Q})$, $\alpha\in (0,1)$, and the corresponding subsequence of $\{u_k\}$ converges in $\mathbf{W}_{p^\ast}(Q)$ to the weak solution of problem \eqref{eq:pert-1} with the data $\{p^\ast,u_0,f_0\}$.
\end{lemma}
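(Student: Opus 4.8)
The plan is to reproduce the scheme of Theorem~\ref{th:comp-p-Laplace}, with the stability estimate of Theorem~\ref{th:stab-1} replaced by its counterpart for Lipschitz sources, Theorem~\ref{th:stab-reaction}. The argument rests on two ingredients: compactness of the family of exponents $\{p_k\}$ in a H\"older space, furnished by Ascoli--Arzel\`a, and the quantitative comparison of Theorem~\ref{th:stab-reaction}, whose right-hand side $\widetilde{\mathcal R}$ is driven to zero by the assumed convergence of the data. Since the comparison estimate already measures the distance to the limit solution, no separate Cauchy step for $\{u_k\}$ is needed.

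First I would extract the limit exponent and fix the limit problem. Under the conditions of Theorem~\ref{th:higher-integr} the functions $p_k$ obey the two-sided bound \eqref{eq:exponent-0} and the uniform Lipschitz bound \eqref{eq:exponent} with a common $L$, so $\{p_k\}$ is bounded and equicontinuous on $\overline Q$. Fixing $\alpha\in(0,1)$, Ascoli--Arzel\`a yields a subsequence with $p_{k_m}\to p^\ast$ in $C^\alpha(\overline Q)$, the limit inheriting $p^-\le p^\ast\le p^+$ and the Lipschitz continuity, hence $p^\ast\in C^\alpha(\overline Q)\subset C_{\rm log}(\overline Q)$. Because $\phi$ is Lipschitz with $\phi(0)=0$, the bound $|f(z,s)|\le D|s|+|f_0(z)|$ places the source within the growth conditions \eqref{eq:source}, \eqref{eq:pert-2}, so by Theorem~\ref{th:pert-1} problem \eqref{eq:pert-1} with the data $\{p^\ast,u_0,f_0\}$ has a weak solution $u^\ast\in\mathbf{W}_{p^\ast}(Q)$, unique by \cite[Th.4.7]{ant-shm-book-2015}.

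Next I would apply the stability estimate along the subsequence. For $m$ large the uniform convergence $p_{k_m}\to p^\ast$ ensures $\sup_Q|p_{k_m}-p^\ast|<1$ and the proximity condition \eqref{eq:proximity} with $p=p_{k_m}$, $q=p^\ast$. The data $\{p_{k_m},u_{0k_m},f_{0k_m}\}$ satisfy the hypotheses of Theorem~\ref{th:higher-integr}, and since $\phi(u_{k_m})+f_{0k_m}\in L^2(0,T;W^{1,2}_0(\Omega))$ by \eqref{eq:phi-cond}, $u_{k_m}$ is a strong solution with the higher-integrability property \eqref{eq:pert-higher}, as in Subsection~\ref{subsec:reaction}; the limit data $\{p^\ast,u_0,f_0\}$ satisfy the hypotheses of Theorem~\ref{th:weak-sol} and yield the weak solution $u^\ast$. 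Applying Theorem~\ref{th:stab-reaction} to this pair gives
\[
\operatorname{ess}\sup_{(0,T)}\|u_{k_m}-u^\ast\|_{2,\Omega}^2(t)+\int_Q|\nabla(u_{k_m}-u^\ast)|^{p^\ast(z)}\,dz\le C\bigl(\widetilde{\mathcal R}_m+\widetilde{\mathcal R}_m^{(p^\ast)^-/2}+\widetilde{\mathcal R}_m^{(p^\ast)^+/2}\bigr),
\]
with $\widetilde{\mathcal R}_m=C{\rm e}^{DT}\bigl(\sup_Q|p_{k_m}-p^\ast|^{(p^\ast)'}+\|f_{0k_m}-f_0\|_{2,Q}+\|u_{0k_m}-u_0\|_{2,\Omega}^2\bigr)$. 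Each term of $\widetilde{\mathcal R}_m$ tends to zero by the assumed convergences, so $\widetilde{\mathcal R}_m\to0$; by \eqref{eq:mod} the left-hand side dominates $\|u_{k_m}-u^\ast\|_{\mathbf{W}_{p^\ast}(Q)}$, which therefore tends to zero, proving the claim.

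The step I expect to be the main obstacle is keeping the constant $C$ uniform in $m$. In the proof of Theorem~\ref{th:stab-1}, $C$ depends on the energy norms of the two solutions and on the higher-integrability constant \eqref{eq:pert-higher} of the strong one, which in turn depend on $\|u_{0k_m}\|_{W^{1,s(\cdot)}_0(\Omega)}$ and $\|\phi(u_{k_m})+f_{0k_m}\|_{L^2(0,T;W^{1,2}_0(\Omega))}$. These must be bounded independently of $m$: this is guaranteed by the uniform constants $p^\pm$, $L$ in the conditions of Theorem~\ref{th:higher-integr} (as in \eqref{eq:cond-comp}), together with uniform bounds on the data norms, the Lipschitz estimate $\int_Q|\nabla\phi(u)|^2\,dz\le D^2\int_Q(1+|\nabla u|^{p(z)})\,dz$ from Subsection~\ref{subsec:reaction}, and the energy estimate \eqref{eq:pert-4}. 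A secondary point, handled exactly as in Theorem~\ref{th:comp-p-Laplace}, is that $u_{k_m}$ and $u^\ast$ a priori belong to the different spaces $\mathbf{W}_{p_{k_m}}(Q)$ and $\mathbf{W}_{p^\ast}(Q)$; Lemma~\ref{pro:test} and the gradient higher integrability make $u_{k_m}-u^\ast$ admissible in $\mathbf{W}_{p^\ast}(Q)$, which legitimizes the application of Theorem~\ref{th:stab-reaction}.
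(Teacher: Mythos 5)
Your proposal is correct and follows exactly the route the paper indicates: the authors omit the proof of Lemma \ref{le:conv-pert-2}, stating only that it imitates the proofs of Theorem \ref{th:comp-p-Laplace} and Corollary \ref{cor:data-k} with the stability estimate of Theorem \ref{th:stab-reaction} in place of Theorem \ref{th:stab-1}, which is precisely what you carry out (Ascoli--Arzel\`a for the exponents, identification of the limit problem via Theorem \ref{th:pert-1} and uniqueness, then the comparison estimate with $\widetilde{\mathcal R}_m\to 0$). Your attention to the uniformity of the constants in $m$ and to the admissibility of $u_{k_m}-u^\ast$ as a test function supplies exactly the details the paper leaves implicit.
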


The assertions are an immediate byproduct of the continuity estimates of Theorem \ref{th:stab-pert} in the case of Lemma \ref{le:conv-pert-1} and  Theorem \ref{th:stab-reaction} in the case of Lemma \ref{le:conv-pert-2}. We omit the details of the proofs which are imitations of the proofs of Theorem \ref{th:comp-p-Laplace} and Corollary \ref{cor:data-k}.


\begin{thebibliography}{10}

\bibitem{AntShm-2009}
{\sc S.~Antontsev and S.~Shmarev}, {\em Anisotropic parabolic equations with
  variable nonlinearity}, Publ. Mat., 53 (2009), pp.~355--399.

\bibitem{ant-shm-book-2015}
{\sc S.~Antontsev and S.~Shmarev}, {\em Evolution {PDE}s with nonstandard
  growth conditions}, vol.~4 of Atlantis Studies in Differential Equations,
  Atlantis Press, Paris, 2015.
\newblock Existence, uniqueness, localization, blow-up.

\bibitem{ArShm-2021}
{\sc R.~Arora and S.~Shmarev}, {\em Strong solutions of evolution equations
  with {$p(x,t)$}-{L}aplacian: existence, global higher integrability of the
  gradients and second-order regularity}, J. Math. Anal. Appl., 493 (2021),
  pp.~124506, 31.

\bibitem{DHHR-2011}
{\sc L.~Diening, P.~Harjulehto, P.~H\"{a}st\"{o}, and M.~R{u}\v{z}i\v{c}ka},
  {\em Lebesgue and {S}obolev spaces with variable exponents}, vol.~2017 of
  Lecture Notes in Mathematics, Springer, Heidelberg, 2011.

\bibitem{DNR-2012}
{\sc L.~Diening, P.~N\"{a}gele, and M.~Ru\v{z}i\v{c}ka}, {\em Monotone operator
  theory for unsteady problems in variable exponent spaces}, Complex Var.
  Elliptic Equ., 57 (2012), pp.~1209--1231.

\bibitem{Eleuteri-2013}
{\sc M.~Eleuteri, P.~Harjulehto, and T.~Lukkari}, {\em Global regularity and
  stability of solutions to obstacle problems with nonstandard growth}, Rev.
  Mat. Complut., 26 (2013), pp.~147--181.

\bibitem{Erhardt-2017}
{\sc A.~H. Erhardt}, {\em Compact embedding for {$p(x,t)$}-{S}obolev spaces and
  existence theory to parabolic equations with {$p(x,t)$}-growth}, Rev. Mat.
  Complut., 30 (2017), pp.~35--61.

\bibitem{Erhardt-2017-stab}
{\sc A.~H. Erhardt}, {\em The stability of parabolic problems with nonstandard
  p(x, t)-growth}, Mathematics, 5 (2017).

\bibitem{Erhardt-2021}
\leavevmode\vrule height 2pt depth -1.6pt width 23pt, {\em Stability of weak
  solutions to parabolic problems with nonstandard growth and
  cross–diffusion}, Axioms, 10 (2021).

\bibitem{Kinnunen-Parviainen-2010}
{\sc J.~Kinnunen and M.~Parviainen}, {\em Stability for degenerate parabolic
  equations}, Adv. Calc. Var., 3 (2010), pp.~29--48.

\bibitem{Lukkari-Parviainen-2015}
{\sc T.~Lukkari and M.~Parviainen}, {\em Stability of degenerate parabolic
  {C}auchy problems}, Commun. Pure Appl. Anal., 14 (2015), pp.~201--216.

\bibitem{SS}
{\sc R.~A. Samprogna and J.~Simsen}, {\em Robustness with respect to exponents
  for nonautonomous reaction-diffusion equations}, Electron. J. Qual. Theory
  Differ. Equ.,  (2018), pp.~Paper No. 11, 15.

\bibitem{SSP3}
{\sc J.~Simsen, M.~S. Simsen, and M.~R.~T. Primo}, {\em Continuity of the flows
  and upper semicontinuity of global attractors for {$p_s(x)$}-{L}aplacian
  parabolic problems}, J. Math. Anal. Appl., 398 (2013), pp.~138--150.

\bibitem{SSP2}
{\sc J.~Simsen, M.~S. Simsen, and M.~R.~T. Primo}, {\em On
  {$p_s(x)$}-{L}aplacian parabolic problems with non-globally {L}ipschitz
  forcing term}, Z. Anal. Anwend., 33 (2014), pp.~447--462.

\bibitem{SSP1}
{\sc J.~Simsen, M.~S. Simsen, and M.~R.~T. Primo}, {\em Continuity of the flow
  and robustness for evolution equations with non globally {L}ipschitz forcing
  term}, S\~{a}o Paulo J. Math. Sci., 14 (2020), pp.~223--241.

\bibitem{Huashui-2021}
{\sc H.~Zhan and Z.~Feng}, {\em Existence and stability of the doubly nonlinear
  anisotropic parabolic equation}, J. Math. Anal. Appl., 497 (2021),
  pp.~124850, 22.

\end{thebibliography}
\end{document}